\newtheorem{thm}{Theorem}[section]
\newtheorem{cor}[thm]{Corollary}
\newtheorem{lem}[thm]{Lemma}
\theoremstyle{definition}
\newtheorem{defn}[thm]{Definition}
\theoremstyle{remark}
\numberwithin{equation}{section}
\newcommand{\abs}[1]{\left| #1 \right|}
\newcommand{\Edg}[1]{\mathcal{E}\left( #1 \right)}
\begin{document}

\title{Lexicographic Configurations}
\author{Christoph Hering\thanks{
Institute of Mathematics of the University of T\"ubingen, 
Auf der Morgenstelle 10, 72076 T\"ubingen, Germany,\newline email: hering@uni-tuebingen.de} \and Andreas Krebs\thanks{
Wilhelm-Schickard-Institute,  University of T\"ubingen, 
  Sand 13, 72076 T\"ubingen, Germany,\newline
  email: krebs@informatik.uni-tuebingen.de}\and Thomas Edgar\thanks{United States Army, Army Europe Libraries, Stuttgart Library\newline email: thomas.b.edgar@us.army.mil}}

\maketitle

\begin{abstract} We describe a new way to construct finite geometric objects. For every $k$ we obtain a symmetric configuration $\mathcal{E}(k-1)$ with $k$ points on a line. In particular, we have a constructive existence proof for such configurations. The method is very simple and purely geometric. It also produces interesting periodic matrices. 
\end{abstract}

$\bf{Keywords:}$ Configurations, projective planes.

$\bf{MSC ~2010 ~classification:}$ 51E15, 05B15\\

\section{Introduction}\label{Introduction}
In this paper we describe a new geometric way to construct finite projective planes and finite symmetric configurations. It concerns a first choice construction that is very elementary. Notably, it produces an interesting finite incidence geometry $\mathcal{E}(n)$ for every rational integer $n$. $\mathcal{E}(n)$ is a symmetric configuration of order $n$, that is, an incidence geometry $\mathcal{E}(n)=(\mathcal{P},\mathcal{B})$ consisting of a non-empty set $\mathcal{P}$ of elements, which we call \emph{points}, and a set $\mathcal{B}$ of subsets of $\mathcal{P}$, which we call \emph{blocks}, such that
\begin{enumerate}[(i)]
  \item if $b$ and $b^\prime$ are blocks, then $\abs{b\cap b^\prime}\leq 1$,
  \item $\abs{b}=n+1$ for all $b\in\mathcal{B}$, and
  \item every point $p\in\mathcal{P}$ is contained in exactly $n+1$ blocks in $\mathcal{B}$.
\end{enumerate}
Here $\vert \mathcal{P} \vert \geq n^2+n+1$. If $\vert \mathcal{P} \vert = n^2+n+1$ and $n \geq 2$, then $(\mathcal{P},\mathcal{B})$ actually is a projective plane.

In infinitely many cases, our construction leads to the symmetric configuration $\Edg{n}$ very rapidly. We obtain, for example, a completely geometric construction of the projective planes PG(2,16) and PG(2,256) (and thereby of GF(16) and GF(256)) without requiring any algebraic foundations. In the general case, however, the calculations appear to be very long, as we shall see in Section \ref{examples}.

We can, however, easily prove that for every integer $k\geq 1$ there exists a symmetric configuration with $k$ points on each line (allowing a sufficiently large number of points).

In light of the fact that there do not exist projective planes of the orders 6 or 10 (see Section \ref{examples}), we are lead to the problem of determining $\Edg{6}$ and $\Edg{10}$. It might be interesting to know what these symmetric configurations are. Although it is possible and indeed simple to calculate $\Edg{5}$, already for $\Edg{6}$ we needed 3 months of computer time on a usual desktop PC, and, until now, we were not able to determine $\Edg{10}$ (see Section \ref{examples}).

With some further effort, the method can be generalized to general (possibly not symmetric) configurations. Then we obtain a much wider variety of geometries. In particular, we find many more cases which can be finished after a rather short computation. For example, we find point-line geometries of higher dimensional projective spaces, Steiner triple systems and the like (see \cite{KHE}).

Additionally, the construction provides an example of an extremely frugal first choice construction which succeeds efficiently in rather complex situations.

The starting point and many of the results of this paper originate from the Diploma Thesis of one of the authors, Thomas Edgar. We are very grateful to Hans-Joerg Schaeffer for many useful remarks and computational support.

\section{The notation}\label{notation}
$\mathbb{N}$ is the set of positive rational integers. Let $A=(a_{ij})_{i,j\in\mathbb{N}}$ be a matrix over $\{0,1\}$. For $i\in\mathbb{N}$ we denote by $A_{i\ast}$ the $i$-th row of $A$ and by $A_{\ast i}$ the $i$-th column of $A$, i.e.

\[
A_{i\ast}=\left(a_{i1},a_{i2},a_{i3},\ldots\right) \text{ and }
A_{\ast i}=\begin{bmatrix}
  a_{1i} \\
  a_{2i} \\
  a_{3i} \\
  \vdots \\
\end{bmatrix}.
\]

We also define the \emph{weight} of a row and a column:\\

$w\left(A_{i\ast}\right)=\abs{\left\{j\in\mathbb{N}\vert a_{ij}=1\right\}}$ is the \emph{weight} of the row $A_{i \ast }$, and\\

$w\left(A_{\ast i}\right)=\abs{\left\{j\in\mathbb{N}\vert a_{ji}=1\right\}}$ is the \emph{weight} of the column $A_{ \ast i}$.\\

Let $i\in \mathbb{N}$, and assume $w(A_{i \ast})\neq 0$ but finite. Then let 
\[
k = \min\left\{j\in\mathbb{N}\mid a_{ij}=1\right\}\mbox{ and }l = \max\left\{j\in\mathbb{N}\mid a_{ij}=1\right\}
\]
Now $l-k+1$ is the \emph{length} of the row $A_{i\ast}$. Correspondingly we can define the \emph{length} of the column $A_{\ast i}$.

Any pair $(i,j)\in\mathbb{N}\times\mathbb{N}$ is called a \emph{cell} of the matrix $A$. A cell $(i,j)$ is called a
\begin{itemize}
  \item \emph{flag} if $a_{ij} = 1$ and a
  \item \emph{galf} if there exists a flag $(k,l)$ such that $a_{kl}=a_{kj}=a_{il}=1$ and $1\leq k \neq i$ and $1\leq l\neq j$.
\end{itemize}

In the geometric part of the paper, we usually follow the notation in the book of Dembowski \cite{Dem} or the paper \cite{Her}.The \emph{union} of two 

An \emph{incidence structure} is a triple $(\mathcal{P},\mathcal{L},I)$ consisting of two sets $\mathcal{P}$ and $\mathcal{L}$ and a relation $I\subseteq\mathcal{P}\times\mathcal{L}$.
For an incidence structure $(\mathcal{P},\mathcal{L},I)$ we denote
\[
[P]=\left\{l\in\mathcal{L} \mid P\ I\ l\right\} \mbox{ for }P\in\mathcal{P}\mbox{, and}
\]
\[
(l)=\left\{P\in\mathcal{P}\mid P\ I\ l\right\} \mbox{ for }l\in\mathcal{L}\mbox{.}
\]
An incidence structure $(\mathcal{P},\mathcal{L},I)$ is called a \emph{symmetric tactical configuration} if there exists an integer $k$ such that $\abs{(l)}=\abs{[P]}=k$ for all $l\in\mathcal{L}$ and $P\in\mathcal{P}$ (compare Dembowski [1, pp. 4, 5]). A symmetric tactical configuration is called a \emph{symmetric configuration} (with parameters $v_k$) if in addition
\[
\abs{(l)\cap (l^\prime)}\leq 1
\]
for $l, l^\prime\in\mathcal{L}\mbox{ and }l\neq l^\prime$. 
The parameters $v$ and $k$ are defined by $\abs{\mathcal{P}}=v$ and $\abs{(l)}=k$ for all $l\in\mathcal{L}$. (Compare Gropp \cite{Gro}).

Note that the term symmetric configuration is stronger than the term symmetric tactical configuration.\\
\section{The first choice construction}
\subsection{The matrix $A$}
Let $n\in\mathbb{N}$. We construct a matrix $A(n)$ inductively.
\[
A(n)=\left(a_{ij}\mid i,j\in\mathbb{N}\right)
\]
is a $\{0,1\}$-matrix and has the following properties:

\begin{enumerate}[(I)]
 \item There does not exist any pair $((i,j),(k,l))$ of pairs of integers such that
 \[
 a_{ij}=a_{il}=a_{kl}=a_{kj}=1\mbox{, where }i,j,k,l\geq 1,\ i\neq k \mbox{ and }j\neq l.
 \]
 In other words, $A$ does not contain any rectangles, of which all corners are ones.
 \item Every row of $A$ contains at most $n+1$ ones.
 \item Every column of $A$ contains at most $n+1$ ones.
\end{enumerate}
Note that the Axiom (I) is equivalent to 

\begin{enumerate}[(I*)]
 \item There do not exist any $4$ integers $i,j,k,l \in\mathbb{N}$ such that $i\neq j,~k\neq l$ and
 \[
 a_{ik}=a_{il}=a_{jk}=a_{jl}=1.
 \]
\end{enumerate}
Or
\begin{enumerate}[(I**)]
 \item A galf is not a flag of $A$.
\end{enumerate}

We now introduce an inductive construction of $A=(a_{ij})=A(n)$, the \emph{greedy algorithm}. We start with the $0$-matrix.
Assume that $k\geq 1$ and that all rows $A_{i\ast}$ are constructed already for $i< k$.
We construct the row $A_{k\ast}$. Assume that $l\geq 1$ and that $a_{k1},a_{k2},\ldots,a_{k,l-1}$ are constructed already. We denote the matrix that has been constructed by this point as $A^{kl}$ for use later.

Construction of $a_{kl}$: \begin{itemize}
  \item If $\sum_{j<l} a_{kj} \geq n+1$ or $\sum_{i<k} a_{il} \geq n+1$, then $a_{kl}=0$ remains.
  \item Otherwise, we check if there exists a pair $(i,j)$ such that
  \[
  a_{ij}=a_{il}=a_{kj}=1 \mbox{, where } 1 \leq i < k \mbox{ and } 1 \leq j <l.
  \]
  In this case, again $a_{kl}=0$ remains.
  \item If both these conditions are not fulfilled, then we set $a_{kl}=1$.
\end{itemize}
The $k$th row is finished when it contains $n+1$ ones. We shall see below that this is the case after finitely many steps.
\begin{defn}\label{def:3.1}
A row or a column of $A^{kl}$ is called \emph{complete} if its weight is $n+1$.
\end{defn}
\begin{lem}\label{lem:3.1}
Let $k,r\geq 1$ and $\overline A = A^{kr}$. The number of galfs for $\overline A$ of the form $(k,l)$, $l\geq 1$, is at most $xn^2$, where $x$ is the number of ones in $\overline A_{k\ast}$.
\end{lem}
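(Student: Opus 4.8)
The plan is to count galfs of the form $(k,l)$ by tracking where the "witnessing" flag $(i,j)$ for each such galf can live, using Axiom (I*) (no all-ones rectangles) to bound the multiplicity. Recall that $(k,l)$ is a galf for $\overline A$ means there is a flag $(i,j)$ with $a_{ij}=a_{kj}=a_{il}=1$ and $i\neq k$, $j\neq l$. First I would fix a column index $l$ for which $(k,l)$ is a galf and analyze the witnessing data: such a witness requires a row $i\neq k$ that is incident with column $l$ (i.e.\ $a_{il}=1$) and with some column $j\neq l$ for which $a_{kj}=1$. So the row $i$ must be one of the at most $n+1$ rows meeting column $l$, and it must also meet one of the $x$ columns $j$ in which $\overline A_{k\ast}$ has a one.

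Next I would count more carefully. Let $J=\{j : a_{kj}=1\}$, so $|J|=x$. For a galf $(k,l)$ we need some $i\neq k$ and some $j\in J\setminus\{l\}$ with $a_{il}=a_{ij}=1$. I want to bound the number of such $l$. The key observation is that for a fixed $j\in J$, the number of columns $l$ that can be "caught" via this particular $j$ is controlled: a row $i$ with $a_{ij}=1$ has at most $n+1$ ones, hence at most $n$ ones in columns $l\neq j$; and by Axiom (I*) the rows $i\neq k$ with $a_{ij}=1$ use pairwise disjoint sets of such columns $l$ (two distinct such rows sharing a column $l\neq j$ would, together with $(k,j)$ and $(k,l)$... — more precisely, Axiom (I) forbids two rows both having ones in columns $j$ and $l$). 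Also column $j$ itself has weight at most $n+1$, so there are at most $n$ rows $i\neq k$ with $a_{ij}=1$. Hence for each fixed $j\in J$, the set of columns $l\neq j$ reachable through $j$ has size at most $n\cdot n=n^2$. Summing over the $x$ choices of $j\in J$ gives at most $xn^2$ galfs of the form $(k,l)$, which is the claim.

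The main obstacle — and the step deserving the most care — is the disjointness argument invoked above: I need to be sure that I am not massively overcounting, i.e.\ that for a fixed $j\in J$ the rows $i\neq k$ witnessing different $l$'s genuinely contribute disjoint column-sets, so that the count is $n$ rows times $n$ columns rather than something larger. This is exactly where Axiom (I**) (a galf is not a flag) / Axiom (I*) gets used: if rows $i_1\neq i_2$ both had $a_{i_1 j}=a_{i_2 j}=1$ and both had a one in a common column $l$, that is an all-ones rectangle on rows $i_1,i_2$ and columns $j,l$, contradiction. I should also double-check the edge cases $l=j$ and $i=k$ are correctly excluded by the definition of galf, and that "at most $n+1$ ones per row/column" in $\overline A=A^{kr}$ (a partially built matrix) still gives the "at most $n$ in the other positions" bound — it does, since a row containing the flag already spends one of its $n+1$ ones in column $j$. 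Once the disjointness is pinned down, the arithmetic $x\cdot n\cdot n=xn^2$ is immediate.
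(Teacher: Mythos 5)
Your proof is correct and follows essentially the same route as the paper's: for each of the $x$ flags $(k,j)$ in row $k$, column $j$ contributes at most $n$ further rows $i$ (its weight is at most $n+1$, one spent on $a_{kj}$), and each such row contributes at most $n$ columns $l\neq j$, giving $xn^2$. The disjointness worry at the end is a non-issue for an upper bound --- overlap among the column-sets could only make the number of distinct $l$'s smaller, so Axiom (I) is not actually needed here, though invoking it does no harm.
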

\begin{proof}
Let $(k,l)$ be a galf for $\overline{A}$. By definition there exists a flag $(i,j)$ such that $i<k, ~j\neq l$ and $a_{ij}=a_{il}=a_{kj}=1$. Here $(k,j)$ is one of the flags on the row $A_{k\ast}$. Starting from $(k,j)$ we have at most $n^2$ possibilities for $l$, because there are at most $n$ ones in the column $A_{\ast j}$ apart from $a_{kj}$, and at most $n+1$ ones in each row of $\overline A$. Hence, altogether, there are at most $xn^2$ possibilities for $l$.
\end{proof}
\begin{thm}\label{thm:3.1}
The length of a row $A_{k\ast},~k\geq 1$, of $A$ is less than $2n^3-n(n-3)$.
\end{thm}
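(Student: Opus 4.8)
The plan is to bound the position $l$ of the last one in row $A_{k\ast}$ by tracking, step by step, what can force $a_{kl}=0$ during the greedy construction. Fix $k$ and suppose the row $A_{k\ast}$ has been constructed; let $l$ be the largest index with $a_{kl}=1$. Every cell $(k,m)$ with $m<l$ and $a_{km}=0$ was set to zero for one of exactly three reasons: (a) the row $A_{k\ast}$ already contained $n+1$ ones at that moment — but this cannot happen for $m<l$ since $a_{kl}=1$, so this case is vacuous; (b) the column $A_{\ast m}$ already contained $n+1$ ones, i.e. $A_{\ast m}$ was complete in $A^{km}$; or (c) there was a pair $(i,j)$ with $i<k$, $j<m$ and $a_{ij}=a_{im}=a_{kj}=1$, which is precisely to say $(k,m)$ was a galf for $A^{km}$.

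First I would count the zeros of type (c). By Lemma~\ref{lem:3.1}, applied with $\overline A = A^{kl}$ (so that $x\le n+1$, the total number of ones in $A_{k\ast}$), the number of galfs for $A^{kl}$ of the form $(k,m)$ is at most $(n+1)n^2$. Once a cell $(k,m)$ is a galf it remains a galf in every later matrix $A^{km'}$, $m'>m$ (adding ones never destroys the witnessing flag $(i,j)$, whose coordinates all lie strictly northwest of row $k$ or in row $k$ to the left of $m$), so every type-(c) zero in the finished row is already a galf of $A^{kl}$; hence there are at most $(n+1)n^2$ of them.

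Next I would count the zeros of type (b). This is the delicate step and the one I expect to be the main obstacle: I need a bound on how many columns can already be complete by the time we reach column $l$. The key observation is that Axiom~(I) forbids rectangles, so the $n+1$ ones of row $A_{k\ast}$ hit $n+1$ distinct columns, and for each complete column $A_{\ast m}$ hit by row $k$ the flag $(k,m)$ "uses up" one of that column's $n+1$ ones. The real work is to show that the number of \emph{other} complete columns among the first $l$ — those not met by row $k$ — is controlled: each such column has its $n+1$ ones spread over $n+1$ distinct rows $i<k$, and via the no-rectangle condition these interact with the ones already placed in rows $<k$ in a way that limits their count to roughly $n^2$ (this is where one pays attention to the exact shape $2n^3 - n(n-3) = 2n^3 - n^2 + 3n$). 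Combining: $l$ is at most the number of ones in the row, namely $n+1$, plus the number of type-(b) zeros, plus the number of type-(c) zeros $\le (n+1)n^2$; a careful accounting of the type-(b) count as at most $n^3 - n^2 + 2n - 1$ (so that the three pieces sum to $2n^3 - n^2 + 3n$) then yields the stated bound. I would finish by verifying the arithmetic $ (n+1) + \bigl[\text{type (b)}\bigr] + (n+1)n^2 < 2n^3 - n(n-3)$ and noting that the "less than" (rather than "$\le$") absorbs the slack, which also confirms the $k$th row is completed after finitely many steps as promised after the construction.
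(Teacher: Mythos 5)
Your overall decomposition --- every cell in the relevant range is a flag, a galf, or sits in an already-complete column --- is the same skeleton the paper uses, but the proposal has three genuine problems. First, you set out to bound the absolute position $l$ of the last one rather than the \emph{length} of the row (last one minus first one plus one). The absolute position is not bounded by any function of $n$: for large $k$ the first one of $A_{k\ast}$ sits far to the right, and \emph{all} columns to its left are complete, so your ``type (b)'' count, taken over ``the first $l$'' columns, grows without bound with $k$. The count must be restricted to complete columns strictly to the right of the first one of row $k$; the paper's set $\mathcal{C}$ is defined exactly this way.

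Second, and more importantly, the step you yourself flag as ``the real work'' is missing. The paper's argument there is a concrete double count: let $\mathcal{L}$ be the rows $A_{i\ast}$, $i<k$, meeting some column of $\mathcal{C}$. Each column of $\mathcal{C}$ contributes $n+1$ incidences, while each row of $\mathcal{L}$ contributes at most $n$ (such a row has a one on or to the left of the first-one column $A_{\ast l}$, since that column was incomplete when the row was built), giving $\abs{\mathcal{C}}(n+1)\leq\abs{\mathcal{L}}\,n$; then $\abs{\mathcal{L}}\leq n^3+n$ because each such row either meets the incomplete column $A_{\ast l}$ (at most $n$ of these) or makes $(i,l)$ a galf for that column (at most $n^3$ by the dual of Lemma \ref{lem:3.1}). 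This yields $\abs{\mathcal{C}}\leq(n^3+n)n/(n+1)$, roughly $n^3$, not the ``roughly $n^2$'' you guess. Third, your galf bound $(n+1)n^2$ is too crude for the stated constant: the paper applies Lemma \ref{lem:3.1} at the stage just before the \emph{last} one is placed, when the row has only $n$ ones, giving $n^3$ galfs. With $(n+1)n^2=n^3+n^2$ instead, the total becomes about $2n^3+3n-1$, which exceeds $2n^3-n^2+3n$ for every $n\geq1$; the strict inequality does not absorb a deficit of order $n^2$. So the proposal correctly identifies the structure of the argument but omits the counting lemma that carries it and uses an estimate that fails to reach the claimed bound.
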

\begin{proof}
We consider the construction of a row of $A$. Let $k\geq 1$ and assume that all rows $A_{i\ast}$ are constructed already for $i< k$. We denote the matrix constructed so far by $\widetilde{A}~=~A^{k1}$ and construct the $k$th row $A_{k\ast}$ according to the above construction of $A$. For the first one in this row we must put $a_{kl} =1$, where $l$ is the smallest number such that the $l$th column of $\widetilde{A}$ contains less than $n+1$ ones. A cell between the first one and the last one on $A_{k\ast}$ must be a flag, a galf, or an intersection of $A_{k\ast}$ with a complete column of $\widetilde{A}$. The number of flags on $A_{k\ast}$ will be $(n+1)$.

We estimate the number of galfs: 
By Lemma \ref{lem:3.1}, the row $A_{k*}$ contains at most $(n+1){n^2}$ galfs $(k,i)$. Here we can do a little better: In the stage before we construct the last one in ${A}_{k*}$, say $a_{kr} =1$, we have only $n$ ones in $\overline{A}_{k*}$ ( $\overline{A}~=~A^{kr}$ as above ) and hence at most ${n^3}$ galfs $(k,i)$ by Lemma 3.1 . Therefore there are at most ${n^3}$ galfs of $\overline{A}$ between the first and the last one in the row $A_{k*}$.

Let $\mathcal{C}$ be the set of complete columns $\widetilde{A}_{*j}$ of $\widetilde{A}$ such that $j>l.$ We must find an upper bound for $\vert\mathcal{C}\vert$. To do this, we consider the set $\mathcal{L}$ of rows $A_{i*}$ such that $i<k$ and $a_{ij}=1,$ for some column $A_{*j} \in \mathcal{C}.$ Counting incidences, we find 
\[
\vert \mathcal{C} | \cdot ~(n+1)~\leq ~\vert \mathcal{L} \vert \cdot ~n
\]
(If $A_{i*}\in \mathcal{L}$, then by the construction of the matrix $A,~a_{il}=1$ or $a_{ij}=1$ for some $j<l.$ Hence the row $A_{l*}$ contains at most $n$ ones to the right of the column $A_{*l}$.)

If $A_{i*}\in \mathcal{L},~1\leq i<k,$ then $a_{il}=1,$ or $(i,l)$ is a galf. There are at most ${n^3}$ suitable galfs for the column $\overline{A}_{*l}$, by the dual of Lemma 3.1. 
Hence $\mid \mathcal{L} \mid \leq {n^3}+n$ and $\mid \mathcal{C}\mid \leq ({n^3}+n)n/(n+1)$. This implies that the length of $A_{k*}$ is at most $\mid \mathcal{C} \mid +~{n^3}+n+1={n^3}-{n^2}+2n-2+2/(n+1)+{n^3}+n+1={2n^3}-{n^2}+3n-1+2/(n+1)<{2n^3}-n(n-3)$ for $n\geq 2$. Clearly, the theorem is also true for $n=1$ (see Section \ref{examples}).
\end{proof}
Note that the arguments in the proof of Theorem \ref{thm:3.1} also imply that each row of $A(n)$ contains exactly $n+1$ ones . Also, in the construction of the matrix $A(n)$ described above, the row $A_{k*}$ can be determined after finitely many steps.

\subsection{The right edge is monotonously increasing}
For each $i\geq1$ define $g(i)$ to be the smallest $j$ such that $a_{ij}=1$.
\begin{lem}\label{lem:3.2}
  The function $g$ is monotonously increasing
\end{lem}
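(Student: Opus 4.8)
The plan is to show that the greedy construction can never place the first one of row $k+1$ strictly to the left of the first one of row $k$, i.e.\ $g(k+1)\ge g(k)$ for all $k\ge 1$. Write $j_0=g(k)$, so $a_{k,j_0}=1$ while $a_{k,j}=0$ for all $j<j_0$. By the construction rule, $a_{k,j_0}=1$ was set because at stage $(k,j_0)$ the column $A_{\ast j_0}$ had fewer than $n+1$ ones, no earlier one existed in row $k$, and no ``rectangle-forming'' pair $(i,j)$ with $i<k$, $j<j_0$, $a_{ij}=a_{i,j_0}=a_{k,j}=1$ was present — the last condition being vacuous since $A_{k\ast}$ has no ones to the left of $j_0$. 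The key structural fact I would extract is: \emph{for every $j<j_0$, the column $A_{\ast j}$ is complete (has $n+1$ ones) by the time row $k$ is being built.} Indeed, if some column $j<j_0$ were not yet complete when we reached cell $(k,j)$, the greedy rule would have set $a_{k,j}=1$ (there is no row-one to its left and no rectangle obstruction, again because row $k$ is empty to the left of $j$), contradicting $j<j_0=g(k)$.

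With that fact in hand, I would argue about row $k+1$. Since columns remain monotone — once a column reaches $n+1$ ones it stays that way, as we never add ones after completion — every column $j<j_0$ is \emph{still} complete when we start constructing row $k+1$. Therefore, at each cell $(k+1,j)$ with $j<j_0$, the first bullet of the construction applies ($\sum_{i\le k} a_{i,j}=n+1\ge n+1$), forcing $a_{k+1,j}=0$. Hence the first one of row $k+1$ occurs at some column $\ge j_0$, that is $g(k+1)\ge j_0=g(k)$, which is exactly monotonicity.

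The one step that requires a little care — and is the main obstacle — is the claim that \emph{every} column $j<g(k)$ is complete by the time row $k$ is built. The naive induction ``$g$ is increasing so earlier rows filled up the early columns'' is slightly circular, so I would instead prove it directly from the greedy rule as sketched above: had column $j<g(k)$ not been complete at cell $(k,j)$, then neither the weight condition, nor the ``earlier one in the row'' condition, nor the rectangle condition could have blocked $a_{k,j}=1$ (the latter two are vacuous because $a_{k,j'}=0$ for all $j'<j\le g(k)$), so the algorithm would have set $a_{k,j}=1$, contradicting the minimality of $g(k)$. This local argument avoids any appeal to the behaviour of previous rows and closes the proof. (Note Theorem \ref{thm:3.1} guarantees $g(k)$ is finite, so these statements are non-vacuous.)
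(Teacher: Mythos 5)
Your proof is correct and follows essentially the same route as the paper: the paper's one-line argument is precisely that all columns $A_{\ast j}$ with $j<g(i)$ must be complete (which you justify carefully from the greedy rule, noting that the row-weight and rectangle conditions are vacuous to the left of the first one in a row), whence row $i+1$ is forced to be zero on those columns. Your write-up merely makes explicit the step the paper labels ``clearly,'' so there is nothing to add.
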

\begin{proof} Let $i\geq 1$ and remember the construction of the row $A_{i\ast}$.
Clearly, all the columns $A_{\ast j}$ must be complete for $1 \leq j < g(i)$. Therefore $g(i+1)\geq g(i)$.
\end{proof}
For each $j\geq 1$, define $f(j)$ to be the smallest $i$ such that $a_{ij}=1$.
\begin{lem}\label{lem:3.3}
The function $f$ is monotonously increasing.
\end{lem}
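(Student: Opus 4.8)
The plan is to argue directly from the greedy construction by contradiction, in the spirit of the proof of Lemma~\ref{lem:3.2}. Note that, despite appearances, the statement is not literally dual to Lemma~\ref{lem:3.2}: the algorithm builds $A$ row by row, so it is not invariant under transposition, and there is no free lunch from ``duality'' here. As a preliminary I would record that $f$ is total, i.e.\ every column of $A$ contains a one. Indeed, by the argument in the proof of Lemma~\ref{lem:3.2}, once the row $A_{i\ast}$ is built every column $A_{\ast j}$ with $j<g(i)$ is complete; and since every row contains exactly $n+1$ ones while every column contains at most $n+1$, the value $g(i)$ cannot stay bounded, so $g(i)\to\infty$ and each column is eventually complete, hence nonzero.

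Now suppose for a contradiction that $f$ is not monotonously increasing, so that $f(j+1)<f(j)$ for some $j$. Put $i:=f(j+1)$; then $a_{i,j+1}=1$, while $i<f(j)$ forces $a_{i',j}=0$ for all $i'\le i$, in particular $a_{i,j}=0$. I would then replay the construction of the cell $(i,j)$ and check that none of the three rejection criteria of the greedy algorithm applies. The row-weight criterion cannot fire at $(i,j)$: otherwise $\sum_{j'<j}a_{i,j'}\ge n+1$, hence also $\sum_{j'\le j}a_{i,j'}\ge n+1$ since $a_{i,j}=0$, and the criterion would equally fire at $(i,j+1)$, forcing $a_{i,j+1}=0$, contrary to the choice of $i$. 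The column-weight criterion cannot fire, since $\sum_{i'<i}a_{i',j}=0<n+1$ because column $j$ has no one above row $f(j)>i$. And there is no pair $(p,q)$ with $1\le p<i$, $1\le q<j$ and $a_{pq}=a_{pj}=a_{iq}=1$, since $a_{pj}=1$ would force $p\ge f(j)>i$. Hence the algorithm sets $a_{i,j}=1$, contradicting $a_{i,j}=0$; therefore $f(j+1)\ge f(j)$ for every $j$, and $f$ is monotonously increasing.

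All the computations are trivial, and the only step needing any care is the verification of the row-weight criterion: it is the one blocking condition of the three that does not follow from $i<f(j)$ alone, and it is exactly here that the hypothesis $f(j+1)<f(j)$ is used in full, through the pair of facts $a_{i,j}=0$ and $a_{i,j+1}=1$.
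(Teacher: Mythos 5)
Your proof is correct and is essentially the paper's own argument: assume $f$ fails to be monotone, locate the offending cell $(i,j)$ with $i=f(j+1)<f(j)$, and verify that none of the greedy algorithm's rejection criteria could have applied there, so the algorithm would have been forced to set $a_{ij}=1$, a contradiction. The paper runs the row-weight check by pointing to the later one $a_{i,k}=1$ in the same row rather than by noting the criterion would re-fire at $(i,j+1)$, and it omits your (correct but optional) preliminary on the totality of $f$, but these are cosmetic differences.
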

\begin{proof} Suppose that there are $j$ and $k$ such that $1\leq j<k$ and $f(k)<f(j)$. Remember the construction of the row $A_{f(k)\ast}$. When $a_{f(k),j}$ is constructed, we have $\sum_{l<j} a_{f(k),l} < n+1$ as $a_{f(k),k}=1$, and $\sum_{l<f(k)} a_{l,j} = 0$ because, in the column $A_{\ast j}$, we have only zeros above $a_{f(k),j}$.
 Therefore the first and the second condition in our construction in Section 3.1 are not fulfilled. Hence, we must put $a_{f(k),j}=1$ and $f(j)=f(k)$, a contradiction.
\end{proof}
\noindent\textbf{Remark.} Lemma \ref{lem:3.3} also follows from Lemma \ref{lem:3.2} because of the symmetry of $A$. See Theorem \ref{thm:3.2} below.

\subsection{A second, symmetric construction of the matrix $A$}

We introduce an inductive construction of a matrix $C=(c_{ij})$. To start, we set $C=(0)$.

Assume that $k\geq 1$ and that $c_{ij}$ are already constructed for $i,j < k$. We construct the row segment
\[
(c_{k,1}, \ldots, c_{k, k})
\]
and the column segment
\[
\left[
  \begin{array}{c}
    c_{1,k} \\
    \vdots \\
    c_{k,k} \\
  \end{array}
\right]
\]
(again, inductively). Assume that $1\leq l \leq k$, and that $c_{k1},\ldots,c_{k, l-1}$ and $c_{1k},\ldots,c_{l-1, k}$ are constructed already. Denote the matrix constructed so far by $C^{kl} = \overline{C}$ and assume that $\overline{C}$ is symmetric and has the properties (I) - (III).

We call the cell $(k,l)$ admissible if:
\begin{itemize}
  \item There does not exists any pair $(i,j)$ such that $c_{kj}=c_{il}=c_{ij}=1$, $1\leq i < k$ and $1\leq j < l$,
  \item The number of ones in the row $\overline{C}_{k\ast}$ is at most $n$, and
  \item The number of ones in the column $\overline{C}_{\ast l}$ is at most $n$.
\end{itemize}
Also, the cell $(l,k)$ is called admissible if:
\begin{itemize}
  \item There does not exists any pair $(i,j)$ such that $c_{ij}=c_{lj}=c_{ik}=1$, $1\leq i < l$ and $1\leq j < k$,
    \item The number of ones in the row $\overline{C}_{l\ast}$ is at most $n$, and
    \item The number of ones in the column $\overline{C}_{\ast k}$ is at most $n$.
\end{itemize}
Now, because of the symmetry of $\overline{C}$, the cell $(k,l)$ is admissible if and only if $(l,k)$ is admissible. If this is the case, then we put $c_{kl}=c_{lk}=1$, otherwise $c_{kl}=c_{lk}=0$.
Thus we obtain an extended matrix $C^{k,l+1}$. Clearly, $C^{k,l+1}$ again is symmetric and has the properties (II) and (III). Suppose that $C^{k,l+1}$ contains a forbidden rectangle. Then $(k,l)$ or $(l,k)$, w.l.o.g. $(k,l)$, must be a corner of this rectangle. But this is impossible if $(k,l)$ is admissible.
Therefore, the resulting matrix $C^{k,l+1}$ is symmetric and has the properties (I) - (III).

The matrix $C$ actually equals the matrix $A$ which we constructed above. To see this, remember the construction of the coefficients $c_{kl}$ resp. $a_{kl}$ for $1\leq k,l$. In the row segment $(c_{k,1},\ldots,c_{k,k-1})$ of $C_{k\ast}$, the construction of the coefficients $c_{ki}$ equals the construction of the $a_{ki}$ in the construction of the row $A_{k\ast}$ anyway (see Section 3.1). Also, $c_{kk}=a_{kk}$.

Consider the column segment
\[
\left[
  \begin{array}{c}
    c_{1,k} \\
    \vdots \\
    c_{k-1,k} \\
  \end{array}
\right]
\]
and the construction of $c_{lk},~1\leq l <k$. Here, $a_{lk}$ arises in the construction of the row $A_{l\ast}$. The conditions on the weights of the relevant rows resp. columns are the same in both constructions. There remains the question of the forbidden rectangles. These rectangles are generated by $(l,k)$ and an opposite corner $(i,j)$, where $(i,j)$ lies in a certain area. But this area is the same in both constructions, namely
\[
\left\{(i,j)\mid 1\leq i\leq l\mbox{ and }1\leq j \leq k\right\}.
\]
Therefore $c_{lk}=a_{lk}$ and we obtain $a_{ij}=a_{ji}$ for $1\leq i,j$, i.e.
\begin{thm}\label{thm:3.2}The matrix $A$ is symmetric.
\end{thm}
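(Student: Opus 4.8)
The plan is to prove symmetry of $A$ not by manipulating $A$ directly, but by exhibiting a manifestly symmetric construction and identifying its output with $A$. Concretely, I would work with the auxiliary matrix $C=(c_{ij})$ built ``along the diagonal'': having filled in all $c_{ij}$ with $i,j<k$, one extends by filling the hook consisting of the row segment $(c_{k,1},\ldots,c_{k,k})$ and the column segment $(c_{1,k},\ldots,c_{k,k})^{T}$ one cell at a time, accepting a cell precisely when it is \emph{admissible} in the sense defined above: no forbidden rectangle is completed with an earlier opposite corner, and neither the relevant row nor the relevant column already carries $n$ ones.

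The first real step is the invariant that every partial matrix $C^{kl}$ is symmetric and satisfies (I)--(III). Symmetry is immediate: the admissibility test for $(k,l)$ and the admissibility test for $(l,k)$ are mirror images of each other, and $\overline{C}=C^{kl}$ is symmetric by induction hypothesis, so the two tests have the same outcome; hence $c_{kl}$ and $c_{lk}$ are set simultaneously to the same value. Properties (II) and (III) are enforced directly by the weight conditions, and (I) survives because any newly created forbidden rectangle would have to use one of the two freshly set entries as a corner, which admissibility forbids. This is a routine induction.

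The heart of the matter is the identity $C=A$, which I would establish by comparing the two greedy procedures cell by cell, in the order the $C$-construction visits them. For an entry $c_{ki}$ with $i<k$ this is essentially definitional: it is produced by exactly the rule that produces $a_{ki}$ in the construction of the row $A_{k\ast}$; likewise for $c_{kk}$. The delicate case is an entry $c_{lk}$ with $l<k$, which in the $A$-construction was decided back during the construction of the row $A_{l\ast}$. For this I would check that the weight bounds on the relevant row $l$ and column $k$ coincide in the two constructions, and that the region in which a forbidden opposite corner $(i,j)$ may lie is the same, namely $\{(i,j)\mid 1\le i\le l,\ 1\le j\le k\}$. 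Here one invokes Lemmas \ref{lem:3.2} and \ref{lem:3.3} (monotonicity of the left edge of the rows and top edge of the columns) together with the symmetry of $C^{kl}$ to argue that entries outside this region are never available to trigger a rejection at that moment. Once cell-by-cell agreement is in hand, $a_{ij}=c_{ij}=c_{ji}=a_{ji}$ for all $i,j\ge 1$, which is the assertion.

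The step I expect to be the main obstacle is precisely this bookkeeping: making fully rigorous that ``the forbidden-rectangle area is the same in both constructions.'' The two algorithms traverse the matrix in genuinely different orders (row-by-row versus diagonal hooks), so one must argue that the set of already-decided $1$-entries that could veto a given cell is the same at the instant that cell is decided. The monotonicity lemmas and the symmetry invariant are exactly the tools that pin this down; everything surrounding it is straightforward induction and weight counting.
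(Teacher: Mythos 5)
Your proposal is correct and follows essentially the same route as the paper: the authors likewise introduce the diagonally-built symmetric matrix $C$, verify by induction that each partial matrix $C^{kl}$ is symmetric and satisfies (I)--(III), and then identify $C$ with $A$ by checking that the weight conditions and the forbidden-rectangle region $\{(i,j)\mid 1\leq i\leq l,\ 1\leq j\leq k\}$ agree in both constructions. The only cosmetic difference is that you explicitly invoke the monotonicity lemmas in the bookkeeping step, which the paper leaves implicit.
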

\begin{lem}\label{lem:3.4a}
  {Let $i\geq 1$. There exists $j$ such that $1\leq j \leq i$ and $a_{ij}=1$}.
\end{lem}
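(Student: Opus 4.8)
The plan is to reformulate the statement in terms of the edge function of Lemma~\ref{lem:3.2}: the assertion ``there exists $j$ with $1\le j\le i$ and $a_{ij}=1$'' is exactly the inequality $g(i)\le i$, where $g(i)$ is the smallest $j$ with $a_{ij}=1$. So fix $i\ge 1$, put $j_0=g(i)$, and aim to prove $j_0\le i$.

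The first step is to understand why every cell $(i,j)$ with $j<j_0$ was assigned the value $0$ by the greedy algorithm. The row-weight obstruction $\sum_{j'<j}a_{ij'}\ge n+1$ cannot be the reason, since this sum is $0$ (there is no one in row $i$ strictly before column $j_0$). The rectangle obstruction cannot be the reason either: it would require a column $j'<j$ with $a_{ij'}=1$, which is impossible because $j'<j<j_0=g(i)$. Hence the only remaining possibility is the column-weight obstruction $\sum_{i'<i}a_{i'j}\ge n+1$; together with property~(III) this forces each such column $A_{\ast j}$, $1\le j<j_0$, to be complete, i.e.\ to contain exactly $n+1$ ones, all of them lying in rows $1,\dots,i-1$.

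The second step uses the monotonicity of $g$ (Lemma~\ref{lem:3.2}) to locate all the ones in columns $1,\dots,j_0-1$. For every $i'\ge i$ we have $g(i')\ge g(i)=j_0$, so $a_{i'j}=0$ whenever $j<j_0$. Combined with the previous step, this shows that every one occurring in one of the columns $A_{\ast 1},\dots,A_{\ast(j_0-1)}$ sits in one of the rows $A_{1\ast},\dots,A_{(i-1)\ast}$. Now a double count of these ones finishes the proof: on the one hand there are $(j_0-1)(n+1)$ of them, since each of the $j_0-1$ columns is complete; on the other hand property~(II) bounds the number of ones in those $i-1$ rows by $(i-1)(n+1)$. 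Therefore $(j_0-1)(n+1)\le(i-1)(n+1)$, so $j_0\le i$, which is the claim.

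I expect the only delicate point to be the first step—verifying carefully that, for $j<g(i)$, neither the row-weight condition nor the rectangle condition can have caused $a_{ij}=0$, so that a complete column is the sole explanation. Once that is nailed down, invoking the monotonicity of $g$ and performing the incidence count are routine.
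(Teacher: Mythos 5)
Your argument is correct, but it takes a genuinely different route from the paper. The paper's proof is a two-line application of the symmetry of $A$ (Theorem~\ref{thm:3.2}): if $a_{i1}=\dots=a_{i,i-1}=0$, then by symmetry $a_{1i}=\dots=a_{i-1,i}=0$ as well, so when the greedy algorithm reaches the cell $(i,i)$ none of the three obstructions (row weight, column weight, rectangle) applies and it is forced to set $a_{ii}=1$. You instead prove the sharper-looking statement $g(i)\le i$ by a double count: you correctly identify that for $j<g(i)$ the only possible reason for $a_{ij}=0$ is a complete column, conclude that each of the columns $A_{\ast 1},\dots,A_{\ast(g(i)-1)}$ carries exactly $n+1$ ones all lying in rows $1,\dots,i-1$, and compare $(g(i)-1)(n+1)$ with the bound $(i-1)(n+1)$ from property~(II). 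Both proofs are valid and rest on previously established material; yours trades the reliance on Theorem~\ref{thm:3.2} (whose proof requires the whole second, symmetric construction of $A$) for a reliance on each row being nonempty (so that $g(i)$ is defined, which the paper secures after Theorem~\ref{thm:3.1}) plus properties (II) and (III). Your invocation of Lemma~\ref{lem:3.2} in the second step is actually redundant: once a column already contains $n+1$ ones in rows above $i$, property (III) alone guarantees it receives no further ones. The paper's proof is shorter given symmetry; yours is more self-contained at the level of the first construction and would survive even in a setting where the symmetry of $A$ had not been established.
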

\begin{proof} Suppose $a_{i 1}=\ldots=a_{i,i-1}=0$. Then by symmetry $a_{1i}=\ldots=a_{i-1,i}=0$. When constructing the row $A_{i\ast}$, we must put $a_{ii}=1$.
\end{proof}

\noindent\textbf{Remark.} (See \cite{KHE}). Let $k,r \in \mathbb{N}$. There exists exactly one matrix $A=(a_{ij})_{i,j\in\mathbb{N}}$ over $\{0,1\}$ such that $a_{ij} = 1$ if and only if none of the following conditions holds
\begin{itemize}
  \item There exist $\overline{i} \leq i$ and $\overline{j} \leq j$ such that $a_{i,\overline{j}} = 
a_{\overline{i},j} = a_{\overline{i},\overline{j}} = 1$
  \item $\sum_{\overline{j}<j} a_{i,\overline{j}} \geq k$
  \item $\sum_{\overline{i}<i} a_{\overline{i},j} \geq r$
\end{itemize}
This matrix is called the \emph{naive matrix} of Type $(k,r)$.
\subsection{The defining matrices}\label{definingmatrices}
In this section we prove that the matrix $A$ is periodic according to the following definition:
\begin{defn}\label{def:3.2}
If there exist integers $p \geq 1$ and $pp \geq 0$ such that
\[
a_{i+p,j+p} = a_{ij} ~for ~all ~i,j > pp,
\]
then we call the $\mathbb{N}\times\mathbb{N}-matrix ~A$ \emph{periodic}, $p$ a \emph{period} and $pp$ a \emph{preperiod} of $A$.
\end{defn}

Let $k\geq 2$ and let $\overline{A}$ be the $\mathbb{N}\times\mathbb{N}$-matrix which coincides with $A$ in it´s first $k-1$ rows, but is 0 otherwise. Let $l(k) = l \geq 1$ be the smallest number, such that the $l$-th  column $\overline{A}_{\ast l}$ contains fewer than $n+1$ ones. By Lemma \ref{lem:3.3}, $\overline{A}_{\ast l} = 0$ if and only if $\overline{A}$ vanishes on, and to the right of the column $\overline{A}_{\ast l}$.

\textbf{Case 1}. Assume that $\overline{A}_{\ast l}=0$. Clearly, $a_{k1}= \ldots=a_{k,l-1}=0$ as $\overline{A}_{\ast i}$ is complete for $1\leq i < l$. By symmetry (Theorem \ref{thm:3.2}), also $a_{1k}= \ldots=a_{l-1, k}=0$. Hence, $k\geq l$ by the minimality of $l$. On the other hand, by Lemma \ref{lem:3.4a}, there exists $r$ such that $1\leq r \leq l$ and $a_{rl}=1$. So $\overline{A}_{\ast l}=0$ implies $k\leq l$, and $k=l$. When we continue the construction of the matrix $A$ and construct $a_{kk}$, we have exactly the same situation as we had, when we were constructing $a_{11}$.
Therefore we have $a_{i+p,j+p}=a_{ij}$ for $i,j \geq 0$, where $p=k-1$, and $A$ is periodic. Hence for $pp=0$ and $p=k-1$ we have Theorem \ref{thm:3.3} below, except for the last inequality.

In the (more general) case, when $\overline{A}_{\ast l} \neq 0$, we determine for each $k\geq 2$ a finite $\{0,1\}$-matrix $M^k$, which determines $A_{k\ast}$ and all further rows $A_{i\ast}$ with $i\geq k$. We have an upper bound for the size of $M^k$. Therefore the matrices $M^k$ will repeat eventually and the matrix $A$ will be periodic after a certain preperiod.

\textbf{Case 2}. Assume that the column $\overline{A}_{\ast l}$ is not the zero column. Define
\[
b=c-l+1,\]
where $c$ is the largest number such that $a_{rc}=1$ for some $r$ such that $1\leq r < k$,
\[
d=k-f,\]
where $f$ is the smallest number such that $a_{fl}=1$, and
\[
M^k_{ij}=a_{i+f-1,j+l-1}\]
 for $1\leq i \leq d$ and $1 \leq j \leq b$.

By Lemma \ref{lem:3.3}, we know $a_{ij}=0$ for $i<k$ and $j>c=l+b-1$, and for $i<f=k-d$ and $j\geq l$. Therefore, the $d\times b$-matrix $M^k$ together with the parameters $k$ and $l$ completely determines the construction of the row $A_{k\ast}$ and all succeeding rows $A_{i\ast}$, $i\geq k$. We call $M^k$ the $k$th \emph{defining matrix}.

Now $b\leq 2n^3-n(n-3)$ by Theorem \ref{thm:3.1}, and $d\leq 2n^3-n(n-3)-1$ because, in addition, the matrix $A$ is symmetric by Theorem \ref{thm:3.2}.

(The ``height'' of a column in $A$ is limited, as is the ``length'' of a row. Also, by the construction of the row$A_{r,\ast}$, this row must contain a one on or to the left of the column $A_{\ast l}$, as $A_{\ast l}$ is not complete.)

So the size of the defining matrix $M^k$ is limited. Denote $\sigma = 2n^3-n(n-3)$ and let us consider all cases from $k=2$ up to $k=2^{\sigma^2}+1$. If for some $k\leq2^{\sigma^2}+1$ we have Case 1, then we obtain Theorem with $pp = 0$, $p = k-1$ and $pp + p = k-1 \leq 2^{\sigma^2}$. Assume now that Case 1 never occurs. Then two of the resulting matrices $M^k$ must be equal. Let $\overline{p}$ be the smallest number $\geq 1$, such that there exists $k\geq 2$ such that
\[
M^{k+\overline{p}}=M^k,
\]
where $k + \overline{p}\leq 2^{\sigma^2} + 1$, and let $\overline{pp}\geq 1$ be the smallest number such that
\[
M^{\overline{pp}+\overline{p}}=M^{\overline{pp}}.
\]

The matrix $M^k$ together with the parameters $k$ and $l=l(k)$ determine $A_{k\ast}$ and the part of the matrix below the row $A_{k\ast}$. By the symmetry of $A$ (Theorem \ref{thm:3.2}, also by Lemma \ref{lem:3.4a} and Theorem \ref{thm:3.1}), the ones in $A$ must remain close to the main diagonal. Therefore we have:
\begin{lem}\label{lem:3.4}
  $l(k+\overline{p})=l(k)+\overline{p}$.
\end{lem}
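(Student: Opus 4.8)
The statement $l(k+\overline{p}) = l(k) + \overline{p}$ should follow from the fact that the defining matrix $M^k$, together with the pair of parameters $(k, l(k))$, completely determines the construction of all rows $A_{i\ast}$ with $i \geq k$, as established in the Case 2 discussion. The plan is to establish a \emph{shift invariance}: whenever $M^{k+\overline{p}} = M^k$, the portion of $A$ from row $k$ downward is a diagonal translate (by $\overline{p}$ in both coordinates) of the portion from row $k+\overline{p}$ downward. Since $l(k)$ is defined purely in terms of which columns to the left are complete at the moment we begin row $k$, and this ``local picture'' is exactly what $M^k$ plus $(k,l(k))$ encodes, equality of the defining matrices forces $l$ to advance by precisely $\overline{p}$.

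\textbf{Key steps.} First I would recall precisely what $M^k$ records: by Lemma \ref{lem:3.3} the ones in rows $1,\dots,k-1$ that can still influence row $A_{k\ast}$ or any later row all lie in the window $f \leq i < k$, $l \leq j \leq c$, and $M^k$ is exactly this window (re-indexed to start at $(1,1)$). Everything strictly left of column $l$ among rows $<k$ is either zero or part of a complete column, and everything strictly above row $f$ in columns $\geq l$ is zero. Second, I would argue by induction on the row index that the greedy construction, when fed the same defining matrix, produces the same subsequent rows up to the diagonal shift: the three conditions governing $a_{kl}$ (row weight $< n+1$, column weight $< n+1$, no forbidden rectangle corner to the upper-left) are all translation-invariant and depend only on the data in $M^k$ together with the knowledge of which columns left of $l$ are complete — and ``complete'' contributes nothing further to the forbidden-rectangle test beyond forcing $a_{k,\cdot}=0$ there, which is already accounted for. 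Third, given $M^{k+\overline{p}} = M^k$, I conclude that the construction of row $A_{k+\overline{p},\ast}$ mirrors that of $A_{k\ast}$ shifted by $\overline{p}$; in particular the first one in row $k+\overline{p}$ appears in column $l(k) + \overline{p}$, i.e. $l(k+\overline{p}) = l(k) + \overline{p}$. One should also check the consistency of this with Lemma \ref{lem:3.2} (so that $l$ is increasing and the shifted columns to the left are indeed the complete ones), which is where the symmetry Theorem \ref{thm:3.2} and Lemma \ref{lem:3.4a} enter to keep the support of $A$ near the diagonal.

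\textbf{Main obstacle.} The delicate point is justifying that $M^k$ and $(k, l(k))$ really determine \emph{everything} relevant below row $k$ — in particular that no interaction with columns far to the left (the complete ones, or the zero region) can ever re-enter the construction in a way not captured by the defining matrix. This is asserted in the text (``the $d\times b$-matrix $M^k$ together with the parameters $k$ and $l$ completely determines the construction of the row $A_{k\ast}$ and all succeeding rows''), so I would lean on that claim; the work is to make the translation-equivariance of the greedy rule explicit and to verify that the column-completeness bookkeeping translates correctly, i.e. that $l(k+\overline{p}) - l(k) = \overline{p}$ is forced rather than merely $\geq$ or $\leq$. Monotonicity of $l$ (Lemma \ref{lem:3.2}) gives one inequality essentially for free; the reverse requires that the ``fresh'' column where row $k+\overline{p}$ starts is not complete, which follows because its analogue for row $k$ was not complete and the completion status is part of what $M^k$ determines.
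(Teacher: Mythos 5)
Your proposal correctly assembles the two ingredients the paper's one‑sentence justification leans on: (i) the defining matrix $M^k$ together with $(k,l(k))$ determines the construction from row $k$ on in a translation‑equivariant way, so $M^{k+\overline p}=M^k$ yields $a_{i+\overline p,\,j+q}=a_{ij}$ for $i\ge k$, where $q=l(k+\overline p)-l(k)$; and (ii) the ones of $A$ stay close to the main diagonal (symmetry, Lemma \ref{lem:3.4a}, Theorem \ref{thm:3.1}). But the step where you actually pin down $q=\overline p$ does not work. Monotonicity of $l$ only gives $l(k+\overline p)\ge l(k)$ --- $l$ is not strictly increasing (for instance $l(1)=l(2)=1$ for every $n\ge 1$, since column $1$ still has only one entry after row $1$ is built) --- so it yields neither of the two inequalities $l(k+\overline p)\ge l(k)+\overline p$ and $l(k+\overline p)\le l(k)+\overline p$. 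The argument for the other direction (``the fresh column where row $k+\overline p$ starts is not complete because its analogue for row $k$ was not complete'') is circular: the column identification induced by $M^{k+\overline p}=M^k$ matches column $l(k)$ with column $l(k+\overline p)$ by the very definition of the defining matrix, so declaring column $l(k)+\overline p$ to be the ``analogue'' of column $l(k)$ already presupposes $q=\overline p$.

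The missing idea is to iterate the shift and use the near‑diagonal bounds quantitatively. From (i) one gets $M^{k+m\overline p}=M^k$ and $l(k+m\overline p)=l(k)+mq$ for all $m\ge 0$. Since the first one of row $r$ sits exactly in column $l(r)$ (see the proof of Theorem \ref{thm:3.1}), Lemma \ref{lem:3.4a} gives $l(k)+mq=l(k+m\overline p)\le k+m\overline p$, which rules out $q>\overline p$ for large $m$. If $q<\overline p$, then $a_{k+m\overline p,\;l(k)+mq}=1$, so by symmetry (Theorem \ref{thm:3.2}) row $l(k)+mq$ contains a one in column $k+m\overline p$ as well as a one in some column $\le l(k)+mq$, giving it length at least $(k-l(k))+m(\overline p-q)+1$, which exceeds the bound of Theorem \ref{thm:3.1} for large $m$. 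Hence $q=\overline p$. This is the content behind the paper's phrase that ``the ones in $A$ must remain close to the main diagonal''; your write‑up names the right lemmas but does not supply this argument, and the substitute you offer in its place is not valid.
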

Hence we have $a_{i+\overline{p},j+\overline{p}}=a_{ij}$ for $i\geq k$. This proves 
\begin{thm}\label{thm:3.3}
There exist integers $pp$ and $p$ such that $0\leq pp,~1\leq p$, 
\[
a_{i+p,j+p}=a_{ij}~for~i > pp,~and
\]
\[
pp+p \leq 2^{\sigma^2},
\]
where $\sigma = 2n^3-n(n-3)$.
\end{thm}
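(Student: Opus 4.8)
The plan is to establish periodicity of $A$ by showing that the local data needed to continue the greedy construction is finite and bounded, hence must eventually repeat. The key observation, already isolated in the text preceding the statement, is that the construction of the row $A_{k\ast}$ and everything below it depends only on the finite ``defining matrix'' $M^k$ together with the two integers $k$ and $l=l(k)$; and up to the relevant translation only $M^k$ matters. So first I would make precise that $M^k$ has bounded size: by Theorem \ref{thm:3.1} the width $b$ satisfies $b \leq \sigma := 2n^3-n(n-3)$, and by the symmetry of $A$ (Theorem \ref{thm:3.2}) together with Lemma \ref{lem:3.4a} the height $d$ satisfies $d \leq \sigma - 1$. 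Hence each $M^k$ is a $\{0,1\}$-matrix fitting inside a $\sigma\times\sigma$ box, so there are at most $2^{\sigma^2}$ possible values for $M^k$.

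Next I would run a pigeonhole argument over the range $k = 2, 3, \ldots, 2^{\sigma^2}+1$, splitting into the two cases already set up in the text. In Case 1 (the column $\overline A_{\ast l}$ is the zero column), the analysis there shows $k = l$ and that constructing $a_{kk}$ reproduces exactly the situation at $a_{11}$; this gives $a_{i+p,j+p} = a_{ij}$ for all $i,j \geq 1$ with $p = k-1$, so $pp = 0$ works and $pp+p = k-1 \leq 2^{\sigma^2}$, giving the theorem. If Case 1 never occurs in that range, then among the $2^{\sigma^2}$ defining matrices $M^2,\ldots,M^{2^{\sigma^2}+1}$ two must coincide; let $\overline p \geq 1$ be minimal with $M^{k+\overline p} = M^k$ for some $k$ (with $k+\overline p \leq 2^{\sigma^2}+1$), and let $\overline{pp}\ge 1$ be minimal with $M^{\overline{pp}+\overline p} = M^{\overline{pp}}$.

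Then I would invoke the fact that $M^k$ together with $k$ and $l(k)$ determines $A_{k\ast}$ and everything below it, combined with Lemma \ref{lem:3.4}, which says $l(k+\overline p) = l(k) + \overline p$. Translating indices by $\overline p$ in both coordinates carries the situation at $(k,l(k))$ to the situation at $(k+\overline p, l(k)+\overline p)$ with the identical defining matrix, so the greedy rule makes the same choices, yielding $a_{i+\overline p,\,j+\overline p} = a_{ij}$ for all $i > \overline{pp}$ (one also uses symmetry of $A$ to handle the $j$-direction, or simply notes the argument is symmetric in rows and columns). Setting $p = \overline p$ and $pp = \overline{pp}$ gives the periodicity relation, and since $\overline{pp} + \overline p \le k + \overline p \le 2^{\sigma^2}+1$, a small bookkeeping step (the minimal $\overline{pp}$ is at most the witnessing $k$, which is at most $2^{\sigma^2}+1-\overline p$) yields $pp + p \le 2^{\sigma^2}$.

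The main obstacle is not the pigeonhole counting but making rigorous the claim that ``$M^k$ plus $k$ and $l(k)$ determines the rest of $A$,'' and in particular that a pure translation by $\overline p$ of the entire tail is consistent with the greedy rule. This requires knowing that no ``old'' ones of $A$ outside the window captured by $M^k$ can ever influence a later decision — precisely the content of Lemmas \ref{lem:3.2} and \ref{lem:3.3} (the edges are monotone, so the ones stay in a bounded band around the diagonal) together with the column-height bound from symmetry and Theorem \ref{thm:3.1}. I would spell out that the forbidden-rectangle test and the two weight tests for a cell $(k',l')$ with $k' \ge k$ only ever consult entries in rows and columns that lie within the translated copy of the $M^k$-window, so equality of defining matrices forces equality of all subsequent decisions; this is the step deserving the most care, and everything else is routine.
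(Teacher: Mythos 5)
Your proposal follows essentially the same route as the paper: bound the size of the defining matrices $M^k$ by $\sigma\times\sigma$ via Theorem \ref{thm:3.1} and symmetry, apply pigeonhole over $k=2,\ldots,2^{\sigma^2}+1$ (with Case 1 handled separately), and use Lemma \ref{lem:3.4} plus the fact that $M^k$, $k$ and $l(k)$ determine the tail of $A$ to conclude periodicity. Your added remark that the determination step (no ``old'' ones outside the $M^k$-window can influence later decisions) is the part deserving rigor is well taken --- the paper asserts it with only a brief appeal to Lemma \ref{lem:3.3} --- but the argument is the same.
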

Let $p$ be the smallest number such that there exists $c\geq 0$, such that $a_{i+p, j+p}=a_{ij}$ for $i>c$ and let $pp$ be the smallest number such that $a_{i+p, j+p}=a_{ij}$ for $i>pp$. We call $p=p(n)$ the \emph{period} and $pp=pp(n)$ the \emph{preperiod} for $n$.\\

\noindent\textbf{Remark.} Clearly the bound for $p(n)$ can easily be improved, e.g. since in view of Lemma \ref{lem:3.3}, the upper right hand area of the defining matrix is always 0.

Because of the periodicity, the `breadth' of the matrix $A$ is limited. That is, the ones of the matrix $A$ remain `close' to the main diagonal. We define
\[
b (n) =\max\left\{\vert j-i \vert \mid i,j\geq 1, a_{ij}=1\mbox{ and }i>pp\right\}.
\]

Also, the lengths of the rows of $A$ are limited. We define ~$l_{max}$ ~to be the \emph{maximum length} of a row $A_{i*}$ with $i> pp(n)$.

Hence, we can always compute the complete Edgar matrix $A$ after finitely many steps.
\section{The configuration $\Edg{n}$}
Let $p=p(n)$ and let $pm, ~m\geq 1$, be a multiple of the period such that $$\left[\frac{pm}{2}\right] > b(n).$$ Denote $\overline{p}=pm$ and $r = \left[\frac{\overline{p}}{2}\right]$. Furthermore, let $v$ be a rational integer larger than or equal to $pp(n)+\overline{p}$. We now define a new matrix $B=(b_{ij})$. $B$ is a $\overline{p}\times\overline{p}$-matrix, and the coefficients of $B$ are defined by
\[
 b_{ij}=
\begin{cases}
 a_{v+i, v+j} & \text{ if }  i-r \leq j \leq i+r\\
 a_{v+i, v+j-\overline{p}} & \text{ if }  j>i+r\\
 a_{v+i, v+j+\overline{p}} & \text{ if }  j<i-r\\
\end{cases}
\]
for $1\leq i,j \leq \overline{p}$.
\begin{thm}\label{thm:4.1}
The weight of every row of $B$ is $n+1$.
\end{thm}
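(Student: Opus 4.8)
The plan is to show that each row of $B$ is obtained from a row of $A$ (far enough down, in the periodic regime) by a cyclic rearrangement of its support, so that the weight is preserved. Fix $i$ with $1 \le i \le \overline{p}$. The entries $b_{ij}$, $1 \le j \le \overline{p}$, are pulled from the single row $A_{v+i,\ast}$: the ``central band'' $i-r \le j \le i+r$ copies $a_{v+i,v+j}$ directly, while the entries with $j > i+r$ copy $a_{v+i,v+j-\overline{p}}$ (i.e. column indices $v+j-\overline{p}$, which lie to the \emph{left} of the band) and the entries with $j < i-r$ copy $a_{v+i,v+j+\overline{p}}$ (column indices $v+j+\overline{p}$, to the \emph{right}). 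So up to the translation $j \mapsto v+j$ and reduction of the column index modulo $\overline{p}$, row $i$ of $B$ is exactly the restriction of row $A_{v+i,\ast}$ to the columns $v+1, \ldots, v+\overline{p}$, read in a shifted cyclic order.

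First I would verify that the map $j \mapsto$ (column of $A$ that $b_{ij}$ reads) is a bijection from $\{1,\ldots,\overline{p}\}$ onto $\{v+i-r,\, \ldots,\, v+i-r+\overline{p}-1\}$ — a block of $\overline{p}$ consecutive columns of $A$ centered (up to parity) at $v+i$. Indeed the three cases partition $\{1,\ldots,\overline{p}\}$ into $j \in [i-r, i+r]$, $j > i+r$, $j < i-r$, and the associated column indices are $v+j$, $v+j-\overline{p}$, $v+j+\overline{p}$ respectively; a short check shows these three sets of column indices are disjoint and their union is the interval $[v+i-r,\, v+i-r+\overline{p}-1]$ of length $\overline{p}$. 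Hence
\[
w(B_{i\ast}) = \#\{\, j : 1 \le j \le \overline{p},\ b_{ij} = 1 \,\} = \#\{\, t : v+i-r \le t \le v+i-r+\overline{p}-1,\ a_{v+i,t} = 1 \,\}.
\]

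Next I would use the breadth bound to argue that this count equals the full weight $w(A_{(v+i)\ast}) = n+1$. By Theorem~\ref{thm:3.1} each row of $A$ has exactly $n+1$ ones, and by the definition of $b(n)$ together with the choice $r = [\overline{p}/2] > b(n)$, every one $a_{v+i,t}=1$ with $v+i > pp$ satisfies $|t-(v+i)| \le b(n) < r$, so $t$ lies strictly inside $[v+i-r,\, v+i+r] \subseteq [v+i-r,\, v+i-r+\overline{p}-1]$ (using $\overline{p}-1 \ge 2r$). Since $v \ge pp(n)+\overline{p} > pp(n)$, the row index $v+i$ is in the periodic/breadth-controlled range, so the window of $\overline{p}$ consecutive columns captures \emph{all} of the ones of that row. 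Therefore the count above is $n+1$, and $w(B_{i\ast}) = n+1$ for every $i$.

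The only mildly delicate point is the bookkeeping of the three cases — checking that the column-index windows for the three pieces of row $i$ tile a single interval of length $\overline{p}$ without overlap or gap, handling the parity issue in $r = [\overline{p}/2]$ (so that $2r$ is $\overline{p}$ or $\overline{p}-1$), and making sure the ``wrapped'' indices $v+j\pm\overline{p}$ stay within the legitimate range of $A$ and still satisfy the breadth bound, which holds because $v \ge pp(n)+\overline{p}$ keeps even $v+j-\overline{p}$ above $pp(n)$... actually here one should be slightly careful: the wrapped entries read rows $A_{v+i,\ast}$ at columns below $v$, and one wants those columns also $> pp(n)$; this follows from $v - \overline{p} \ge pp(n)$ when $j \ge 1$, i.e. again from $v \ge pp(n) + \overline{p}$. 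Everything else is immediate from Theorem~\ref{thm:3.1} and the definitions of $b(n)$, $r$, and $B$.
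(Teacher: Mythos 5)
Your argument is correct and is essentially the paper's own proof, only written out in more detail: the paper likewise observes that $B_{i\ast}$ is a cyclic rearrangement of a window of row $A_{(v+i)\ast}$ and that the inequality $b(n)<\left[\overline{p}/2\right]$ forces all $n+1$ ones of that row into the window. (Your stated window $[v+i-r,\,v+i-r+\overline{p}-1]$ is off by one for even $\overline{p}$ in some cases, but as you note this parity bookkeeping does not affect the containment argument.)
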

\begin{proof}
  Let $1\leq i \leq \overline{p}$. The $i$th row $B_{i\ast}$ of $B$ is constructed from the $(v+i)$th row of $A$, which has weight $n+1$ by the construction of $A$. Part of $B_{i\ast}$ is obtained by shifting a segment of the row $A_{i\ast}$ to the right, respectively to the left. The complementary segment of $B_{i\ast}$ just remains the same as the corresponding segment of $A_{i\ast}$. This together with the inequality $b<\left[\frac{\overline{p}}{2}\right]$ implies that the weights of the rows remain the same.
\end{proof}
\begin{thm}\label{thm:4.2}
  B is symmetric.
\end{thm}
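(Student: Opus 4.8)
The plan is to deduce the symmetry of $B$ from the two structural facts already established about $A$: that $A$ is symmetric (Theorem \ref{thm:3.2}) and that $A$ is periodic with period $p=p(n)$ and preperiod $pp=pp(n)$ (Theorem \ref{thm:3.3}), using that $\overline{p}$ is a multiple of $p$, so that iterating periodicity gives $a_{x+\overline{p},\,y+\overline{p}}=a_{x,y}$ whenever $x>pp$.

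First I would rewrite the definition of $B$ in a uniform way. For $1\le i,j\le\overline{p}$ put $t(i,j)=0$ if $i-r\le j\le i+r$, $t(i,j)=1$ if $j>i+r$, and $t(i,j)=-1$ if $j<i-r$; then in each of the three cases of the definition one has $b_{ij}=a_{v+i,\,v+j-t(i,j)\overline{p}}$. The combinatorial heart of the argument is the observation that this case distinction is antisymmetric, i.e. $t(j,i)=-t(i,j)$: from $i-r\le j\le i+r$ one gets $j-r\le i\le j+r$; from $j>i+r$ one gets $i<j-r$; and from $j<i-r$ one gets $i>j+r$. (This uses only the partition structure, not the specific value $r=[\overline{p}/2]$.) Hence, writing $t=t(i,j)$, we have $b_{ij}=a_{v+i,\,v+j-t\overline{p}}$ and $b_{ji}=a_{v+j,\,v+i+t\overline{p}}$.

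Next I would chain two equalities. By the symmetry of $A$ (Theorem \ref{thm:3.2}), $a_{v+i,\,v+j-t\overline{p}}=a_{v+j-t\overline{p},\,v+i}$; and since $A$ has period $\overline{p}$, shifting both coordinates by $t\overline{p}$ yields $a_{v+j-t\overline{p},\,v+i}=a_{v+j,\,v+i+t\overline{p}}$. The right-hand side is exactly $b_{ji}$, so $b_{ij}=b_{ji}$ and $B$ is symmetric. When $t=0$ this collapses to a single application of Theorem \ref{thm:3.2}.

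The only point requiring care — routine bookkeeping rather than a real obstacle — is checking that every matrix entry occurring in this chain has index exceeding the preperiod $pp$, so that the iterated periodicity of $A$ is legitimate; this is precisely the role of the hypothesis $v\ge pp(n)+\overline{p}$. Since $1\le i,j\le\overline{p}$, the smallest index that can appear is $v+1-\overline{p}\ge pp(n)+1>pp(n)$, and all the others are larger; in particular both $v+j-t\overline{p}$ and $v+j$ exceed $pp(n)$, which is all the shift by $t\overline{p}$ needs.
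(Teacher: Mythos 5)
Your proof is correct and is essentially the paper's own argument: both rest on the symmetry of $A$ (Theorem \ref{thm:3.2}), the periodicity of $A$ with period dividing $\overline{p}$, and the observation that the region of the case distinction containing $(j,i)$ is the ``opposite'' of the one containing $(i,j)$. The only difference is presentational — you package the paper's three-case analysis into a single chain via the sign $t(i,j)$ with $t(j,i)=-t(i,j)$, and you make explicit the index check $v\ge pp(n)+\overline{p}$ that the paper leaves implicit.
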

\begin{proof}
  Let $1\leq i,j \leq \overline{p}$. If $i-r\leq j \leq i+r$, then $b_{ij}=a_{v+i, v+j}=a_{v+j, v+i}$, because $A$ is symmetric by Theorem \ref{thm:3.2}. Also, $i-r\leq j \leq i+r$ implies $j-r\leq i \leq j+r$, so that $a_{v+j, v+i}=b_{ji}$ by definition of $B$. Finally, if $j<i-r$ (and $i>r$), then
  \[
  b_{ij}=a_{v+i,v+j+\overline{p}}=a_{v+i-\overline{p}, v+j}=a_{v+j, v+i-\overline{p}}=b_{ji},
  \]
  where we have the equalities because of the definition of the matrix $B$ by the periodicity of $A$, the symmetry of $A$, and, again, the definition of $B$.
\end{proof}
From Theorem \ref{thm:4.1} and Theorem \ref{thm:4.2} we obtain
\begin{thm}\label{thm:4.3}
The weight of every column of $B$ is $n+1$.
\end{thm}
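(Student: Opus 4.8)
The plan is to deduce Theorem \ref{thm:4.3} directly from Theorems \ref{thm:4.1} and \ref{thm:4.2} by a short transposition argument. The key observation is that the weight of the $j$th column of any matrix equals the weight of the $j$th row of its transpose. Since Theorem \ref{thm:4.2} tells us that $B$ is symmetric, i.e.\ $B = B^{T}$, the $j$th column of $B$ has the same weight as the $j$th row of $B^{T} = B$, and by Theorem \ref{thm:4.1} every row of $B$ has weight $n+1$.

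In more detail, first I would fix $1 \le j \le \overline{p}$ and write out $w(B_{\ast j}) = \abs{\{i \mid b_{ij} = 1\}}$. Then I would invoke Theorem \ref{thm:4.2} to replace each $b_{ij}$ by $b_{ji}$, so that $w(B_{\ast j}) = \abs{\{i \mid b_{ji} = 1\}} = w(B_{j\ast})$. Finally I would apply Theorem \ref{thm:4.1} to conclude $w(B_{j\ast}) = n+1$, hence $w(B_{\ast j}) = n+1$ as claimed. Since $j$ was arbitrary in $\{1,\dots,\overline{p}\}$, this covers every column of $B$.

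There is essentially no obstacle here: the statement is an immediate corollary, and the only thing to be careful about is that the index ranges match, which they do because $B$ is a square $\overline{p}\times\overline{p}$-matrix, so rows and columns are indexed by the same set $\{1,\dots,\overline{p}\}$. One could alternatively avoid appealing to symmetry and instead redo the argument of Theorem \ref{thm:4.1} verbatim with rows replaced by columns, using that each column $A_{\ast(v+j)}$ of $A$ has weight $n+1$ (which holds since $A$ is symmetric with all rows of weight $n+1$) together with the inequality $b(n) < \left[\frac{\overline{p}}{2}\right]$; but the transposition argument via Theorem \ref{thm:4.2} is shorter and is the route I would take.
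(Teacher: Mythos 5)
Your proof is correct and matches the paper exactly: the paper derives Theorem \ref{thm:4.3} as an immediate consequence of Theorem \ref{thm:4.1} (all rows of $B$ have weight $n+1$) and Theorem \ref{thm:4.2} (symmetry of $B$), which is precisely your transposition argument.
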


Thus our new (finite!) matrix $B$ again fulfills Axioms (II) and (III). We now prove that $B$ also fulfills Axiom (I).

We use the matrix $B$ as the incidence matrix of an incidence structure. Let $\Edg{n} = \mathcal{E} = (\mathcal{P},\mathcal{L})$, where $\mathcal{P}$ is the set of columns and $\mathcal{L}$ the set of rows of $B$. Define the incidence
\[
B_{\ast i}\ I\ B_{j\ast} \Leftrightarrow b_{ji}=1 \text{ for } 1\leq i,j \leq \overline{p}
\]
$\mathcal{E}=\Edg{n}=(\mathcal{P,L})$ is the \emph{Edgar structure} of $n$. Clearly, $\Edg{n}$ is a finite symmetric tactical configuration. Also, we have
\begin{thm}\label{thm:4.4}
$\abs{(a)\cap (b)}\leq 1$ for $a,b\in\mathcal{L}$ and $a\neq b$, if we choose $m$ such that $$\overline p =pm \geq 2\cdot l_{max}.$$
\end{thm}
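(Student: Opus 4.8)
The plan is to verify that the finite matrix $B$ satisfies Axiom (I), i.e.\ that it contains no forbidden rectangle. This is equivalent to the assertion: a rectangle with corners $(i,j),(i,l),(k,j),(k,l)$, all entries $1$, is exactly a pair of distinct lines $a=B_{i\ast}$, $b=B_{k\ast}$ with $\abs{(a)\cap(b)}\ge 2$. So I will assume for contradiction that $b_{ij}=b_{il}=b_{kj}=b_{kl}=1$ for some $1\le i,k\le\overline{p}$ with $i\ne k$ and some $1\le j,l\le\overline{p}$ with $j\ne l$, and derive a forbidden rectangle in the infinite matrix $A$, which is impossible by Axiom (I).

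First I would rewrite the definition of $B$ cell by cell: for each cell $(i,j)$ there is a unique integer $j^\ast=j^\ast(i,j)$ with $j^\ast\equiv j\pmod{\overline{p}}$ and $\abs{j^\ast-i}\le r$, namely the member of $\{\,j,\;j-\overline{p},\;j+\overline{p}\,\}$ selected by the three cases in the definition of $B$, and $b_{ij}=a_{v+i,\,v+j^\ast}$. Put $c_1=v+j^\ast(i,j)$, $c_2=v+j^\ast(i,l)$, $c_3=v+j^\ast(k,j)$, $c_4=v+j^\ast(k,l)$. Our assumption then produces four ones of $A$: $a_{v+i,c_1}=a_{v+i,c_2}=a_{v+k,c_3}=a_{v+k,c_4}=1$, where $c_1\equiv c_3$ and $c_2\equiv c_4$ modulo $\overline{p}$, while $c_1\not\equiv c_2$ because $j\ne l$ in $\{1,\dots,\overline{p}\}$; in particular $c_1\ne c_2$ and $c_3\ne c_4$.

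The key step is to show $c_3-c_1=c_4-c_2$, that is, that the columns $j$ and $l$ are unwound from $B$ to $A$ by the \emph{same} shift in both rows. Since $v>pp(n)$, the rows $A_{v+i,\ast}$ and $A_{v+k,\ast}$ each have length at most $l_{max}$, whence $\abs{c_1-c_2}\le l_{max}-1$ and $\abs{c_3-c_4}\le l_{max}-1$, and therefore $\abs{(c_3-c_1)-(c_4-c_2)}\le\abs{c_3-c_4}+\abs{c_1-c_2}<2l_{max}\le\overline{p}$. As $(c_3-c_1)-(c_4-c_2)$ is a multiple of $\overline{p}$ smaller in absolute value than $\overline{p}$, it is $0$, so $c_3-c_1=c_4-c_2=:t$, a multiple of $\overline{p}$; and $\abs{t}=\abs{j^\ast(k,j)-j^\ast(i,j)}\le 2r+\abs{k-i}\le 2r+\overline{p}-1<2\overline{p}$ forces $t\in\{-\overline{p},0,\overline{p}\}$.

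Finally I would treat the three possibilities for $t$. If $t=0$, then $c_1=c_3$ and $c_2=c_4$ and the four ones already form a forbidden rectangle of $A$ (rows $v+i\ne v+k$, columns $c_1\ne c_2$), contradicting Axiom (I). If $t=\overline{p}$, then $v+k-\overline{p}>pp(n)$ because $v\ge pp(n)+\overline{p}$, so by the periodicity of $A$ (Theorem~\ref{thm:3.3}, with $\overline{p}$ a multiple of the period $p(n)$) we get $a_{v+k-\overline{p},\,c_1}=a_{v+k,\,c_1+\overline{p}}=a_{v+k,\,c_3}=1$ and similarly $a_{v+k-\overline{p},\,c_2}=1$; together with $a_{v+i,c_1}=a_{v+i,c_2}=1$ and $v+i\ne v+k-\overline{p}$ this is again a forbidden rectangle of $A$. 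The case $t=-\overline{p}$ is symmetric, using the row $v+k+\overline{p}$. Every case contradicts Axiom (I), so $B$ contains no forbidden rectangle, which is the theorem. I expect the step $c_3-c_1=c_4-c_2$ to be the crux and the only place the hypothesis $\overline{p}\ge 2l_{max}$ enters: a priori the cyclic wrap in the definition of $B$ might send the columns $j$ and $l$ to $A$-columns shifted by different multiples of $\overline{p}$ in the two rows $v+i$ and $v+k$, and it is exactly the bound on the lengths of rows of $A$ that prevents this.
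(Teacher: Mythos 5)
Your proof is correct, and it takes a genuinely different route from the paper's. The paper classifies each of the four corners of the hypothetical rectangle in $B$ according to whether it lies in the ``unshifted'' region $A_1$ or in one of the wrap-around regions $B_1,B_2$, and then works through all $16$ resulting sign patterns, using auxiliary monotonicity observations $(*),(*d),(**)$ to kill most cases and invoking periodicity or the bound $\overline p\ge 2\,l_{max}$ in the remaining ones. You instead unwrap each entry $b_{ij}=a_{v+i,\,v+j^\ast}$ uniformly, and the whole case analysis collapses into one arithmetic observation: the two column-offsets $c_3-c_1$ and $c_4-c_2$ are congruent mod $\overline p$ and differ by less than $2\,l_{max}\le\overline p$ (by the row-length bound), hence are equal to a common $t\in\{-\overline p,0,\overline p\}$; each value of $t$ then yields a forbidden rectangle in $A$ directly (for $t=\pm\overline p$ after one periodicity shift of a row, with the index checks $v+k\mp\overline p>pp(n)$ and $v+i\ne v+k\mp\overline p$ that you carry out). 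This is shorter, isolates exactly where the hypothesis $\overline p\ge 2\,l_{max}$ is used, and does not need the symmetry of $B$ (Theorem~\ref{thm:4.2}), which the paper invokes only to identify ``dual'' cases. The one imprecision is your claim that $j^\ast$ is the \emph{unique} representative of $j$ mod $\overline p$ with $\abs{j^\ast-i}\le r$: when $\overline p=2r$ is even there can be a tie at $j^\ast=i\pm r$; but since you then define $j^\ast$ by the explicit case selection in the definition of $B$, and your argument only uses $j^\ast\equiv j\pmod{\overline p}$, $\abs{j^\ast-i}\le r$ and $b_{ij}=a_{v+i,v+j^\ast}$, this does not affect the proof.
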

\begin{proof}Suppose we have integers $i,j,k,l$ such that $1\le i,j,k,l \le \bar p = pm,~ b_{jk} = b_{jl}=b_{il} = b_{ik}=1,~i < j$ and $k< l$. We denote the corners of the generated "rectangle" by $C_{1} = (j,k)$, $C_{2} = (j,l)$, $C_{3} = (i,l)$ and $C_{4} = (i,k)$. We denote 
\begin{itemize}
  \item $A_1 = \{(i,j) \mid 1\leq i,j \leq \overline{p}$ and $i-r \leq j \leq i+r\}$,
  \item $B_1 = \{(i,j) \mid 1\leq i,j \leq \overline{p}$ and $j>i+r\}$,
  \item $B_2 = \{(i,j) \mid 1\leq i,j \leq \overline{p}$ and $j<i-r\}$, and
  \item $B = B_1 \cup B_1 $.
\end{itemize}
For $1 \leq s \leq 4$ we define $c_{s}$ by 

$$c_{s} = \begin{cases}
1 & \text{if $C_s\in A_1$ and }\\
0 & \text{if $C_s\in B=B_1\cup B_2$.}\\
\end{cases}$$

So we obtain a vector $(c_{1}, c_{2}, c_{3}, c_{4}).$ We discuss the 16 possibilities for this vector. 

\begin{description}
\item[$(1111)$] Here all 4 corners lie in the original matrix $A.$ This is not possible by Axiom I.
\item[$(1110)$] We have 

\hspace{-.5cm}$(*)$ If a corner $C$ lies in $B_{1},$ then any corner above $C$ or to the right of $C$ lies in $B_{1}$.

    As $C_{4}$ lies in $B$ but $C_{3}$ not, we find that $C_4\notin B_{1}.$ On the other hand $C_4 \in B,$ but $C_1 \in A.$ Hence by the dual of $(*)$, $C_{4}\notin B_{2},$ a contradiction.
    Note that the dual of $(*)$ reads 

\hspace{-.5cm}$(*d)$ If a corner $C$ lies in $B_{2},$ then any corner below $C$ or to the left of $C$ lies in $B_2.$ 
\item[$(1101)$] By $(*d),\, C_{3}=(i,l) \in B_{1}$. Therefore $l>i+r$ and $b_{il} = a_{v+i,v+l-\overline{p}}$. We have $1 = b_{jk} = a_{v+j,v+k}$ and $1 = b_{jl} = a_{v+j,v+l}$
so that $l-k\leq l_{max}-1$. Also $1 = b_{il} = a_{v+i,v+l-\overline{p}}$ and $1 = b_{ik} = a_{v+i,v+k}$ so that 
$k-l+\overline{p}\leq l_{max}-1$. Hence $\overline{p}\leq 2\cdot (l_{max}-2)$.
\item[$(1100)$] By $(*d),$ $C_{3}$ and $C_{4}$ cannot belong to $B_{2}.$ So $C_{3}, C_{4} \in B_{1}.$ This case is symmetric to the case $(0110)$ below. 
\item[$(1011)$] Here $C_2 \in B.$ If $C_2 \in B_1,$ then $C_3 \in B_1$ by $(*)$. So $C_2 \in B_2,$ and $C_1 \in B_2$ by $(*d),$ a contradiction. 
\item[$(1010)$] leads to a contradiction as $(1011).$ 
\item[$(1001)$] Here $C_2,C_3 \in B_1$ by $(*d)$. We have $b_{ik} = a_{v+i,v+k} = b_{jk} = a_{v+j,v+k} =1$ and $b_{il} = a_{v+i,v+l- \overline p} = b_{jl} = a_{v+j,v+l- \overline p} = 1.$ This contradicts Axiom I. (Note that ${l \leq \overline p}$ so that $v+l- \overline p <v+k$.) 
\item[$(1000)$] $C_2 \in B$ and $C_1 \in A_1$ implies $C_2 \in B_1$ by $(*d).$ We have

\hspace{-0.5cm}$(**)$ If a corner $C$ belongs to $B_i , i \in \{1,2\},$ then any corner which lies in $B = B_1 \cup B_2$ and directly below, above, to the right or to the left of $C$ again lies in $B_i$. (Note that no row of $B$ contains a cell of $B_1$ and a cell of $B_2$ at the same time: Suppose we have $(i,j) \in B_1$ and $(i,j') \in B_2$. Then $j>i+r$ and $i<j-r \leq \overline{p}-r\leq r+1.$ On the other hand, $j'<i-r$ and $i>j'+r\geq 1+r$, a contradiction.) \\
By this Lemma $C_3 , C_4 \in B_1.$ Therefore $l>j+r$ and $k,l>i+r$. Analogous to the case (1101) we have $1 = b_{ik} = a_{v+i,v+k-\overline{p}}$ and $1 = b_{il} = a_{v+i,v+l-\overline{p}}$ so that $l-k\leq l_{max}-1$. Also $1 = b_{jl} = a_{v+j,v+l-\overline{p}}$ and $1 = b_{jk} = a_{v+j,v+k}$ so that 
$k-l+\overline{p}\leq l_{max}-1$. As in Case (1101) we obtain $\overline{p}\leq 2\cdot (l_{max}-2)$, contradicting our assumption.

\item[$(0111)$] $C_1 \in B_2$ by $(*) . $ This is symmetric to the case $(1101) . $ 
\item[$(0110)$] $C_1 \in B$ and $C_2 \in A_1$ implies $C_1 \in B_2$ by $(*) . $ In the same way we see that $C_4 \in B_2 . $ As in case $(1001)$ we can prove that this is impossible. 
\item[$(0101)$] $C_1 \in B$ and $C_2 \in A_1$ implies $C_1 \in B_2$. $C_3 \in B$ and $C_4 \in A_1$ implies $C_3 \in B_1 . $ \\
    We construct from our `rectangle' in the matrix $B$ a `rectangle' in the matrix $A$. We have \\
    $a_{v+i,v+l- \overline p}~=~b_{il}~=~1$,\\
    $a_{v+i,v+k}~=~b_{ik}~=~1$,\\
    $a_{v+j- \overline p,v+k}~=~a_{v+j,v+k+ \overline p}~=~b_{jk}~=~1$, and \\
    $a_{v+j- \overline p,v+l- \overline p}~=~a_{v+j,v+l}~=~b_{jl}~=~1$,\\
    because $C_3~=~(i,l) \in B_1 ,~C_4~=~(i,k) \in A_1 ,~C_1~=~(j,k) \in B_2 ,~C_2~=~(j,l) \in A_1$, and because $A$ is periodic with period $p.$ But this contradicts Axiom (I). (Note that $v+j- \overline p \neq v+i$ and $v+l- \overline p \neq v+k$ because $j-i,k-l< \overline p.$) Hence this case is not possible. 
\item[$(0100)$] $C_1 \in B$ and $C_2 \in A$ implies $C_1 \in B_2$ by $(*).$ By $(**)$ we obtain $C_4 \in B_2$ and $C_3 \in B_2.$ Thus $C_2 \in B_2$ by $(*d),$ a contradiction. 
\item[$(0011)$]$C_4 \in A$ and $C_1 \in B$ implies $C_1 \in B_2$ by $(*).$ Also $C_2 \in B_2$. This case is symmetric to $(1001)$ and hence impossible. 
\item[$(0010)$]$C_2 \in B$ and $C_3 \in A_1$ implies $C_2 \in B_2$ by $(*).$ Hence $C_1 , C_4 \in B_2$ by $(**).$ Symmetric to $(1000).$ 
\item[$(0001)$]$C_4 \in A_1$ and $C_1 \in B$ implies $C_1 \in B_2$ by $(*).$ Hence $C_2 , C_3 \in B_2$ by $(**).$ But $C_3 \in B_2$ and $C_4 \in A_1$ is impossible by $(*d).$ 
\item[$(0000)$] By $(**),~ C_1 , C_2 , C_3 , C_4 \in B_1$ or $C_1 , C_2 , C_3 , C_4 \in B_2.$ Impossible by Axiom (I). 
\end{description} \vspace{-0.75cm} \end{proof}
Hence, $\Edg{n}$ is a symmetric configuration with parameters $\overline{p}_{n+1}$ (i.e. with a point set of cardinality $\overline{p}$ and $n+1$ points on every line). Note that we obtain a symmetric configuration for every $m$ which is sufficiently large. Thus we actually have a series of configurations.

As a consequence of Theorem \ref{thm:4.4} we obtain
\begin{cor}\label{cor:4.5}
For every integer $k\geq 1$ there exists a finite symmetric configuration with k points on each line.
\end{cor}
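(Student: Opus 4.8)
The plan is to specialize the construction of $\mathcal{E}(n)$ developed in this section to the value $n = k-1$. First I would dispose of the degenerate case $k = 1$ separately: a single point incident with a single line — or, more generally, any bijection between a finite point set and a finite line set with $P\,I\,l$ if and only if $P = l$ — is a finite symmetric configuration with exactly one point on each line (and one line through each point), and the condition $\abs{(l)\cap(l')}\leq 1$ holds vacuously. So we may assume $k \geq 2$ and put $n = k-1 \in \mathbb{N}$.

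Next I would invoke the results already established for this $n$. By Theorem \ref{thm:3.3} the Edgar matrix $A = A(n)$ is periodic, with some period $p = p(n)$ and preperiod $pp = pp(n)$; consequently the quantities $b(n)$ and $l_{max}$ introduced after Theorem \ref{thm:3.3} are finite. Hence one can pick a multiple $\overline{p} = pm$ of $p$ large enough that both $\left[\frac{\overline{p}}{2}\right] > b(n)$ and $\overline{p} \geq 2\cdot l_{max}$ hold, and then choose any integer $v \geq pp(n) + \overline{p}$. With these choices the finite $\overline{p}\times\overline{p}$ matrix $B = (b_{ij})$ is defined as above, and $\mathcal{E}(n) = (\mathcal{P},\mathcal{L})$ is the associated finite incidence structure whose points are the columns and whose lines are the rows of $B$.

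It then remains only to read off that $\mathcal{E}(n)$ has the required properties. By Theorem \ref{thm:4.1} and Theorem \ref{thm:4.3} every row and every column of $B$ has weight exactly $n+1 = k$, so $\mathcal{E}(n)$ is a symmetric tactical configuration with $\abs{(l)} = \abs{[P]} = k$ for all $l \in \mathcal{L}$ and $P \in \mathcal{P}$. By Theorem \ref{thm:4.4} — applicable precisely because we arranged $\overline{p} \geq 2\cdot l_{max}$ — we have $\abs{(a)\cap(b)} \leq 1$ for any two distinct lines $a,b \in \mathcal{L}$. Therefore $\mathcal{E}(k-1)$ is a finite symmetric configuration with parameters $\overline{p}_{\,k}$, in particular one with $k$ points on each line, which proves the corollary.

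I do not anticipate a genuine obstacle here, since all of the substantive content is already packaged in Theorems \ref{thm:3.3} and \ref{thm:4.1}--\ref{thm:4.4}; the one point that deserves a moment's care is that the several simultaneous size requirements on $\overline{p}$ (together with the lower bound on $v$) can in fact be met at once, which is immediate once one knows that $p(n)$, $pp(n)$, $b(n)$, and $l_{max}$ are all finite.
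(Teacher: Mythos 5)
Your proposal is correct and matches the paper's (implicit) argument exactly: the corollary is obtained by setting $n=k-1$ and reading off Theorems \ref{thm:4.1}, \ref{thm:4.3} and \ref{thm:4.4} for a sufficiently large multiple $\overline{p}=pm$ of the period. Your separate treatment of $k=1$ is a sensible extra precaution, since the construction is set up for $n\in\mathbb{N}$.
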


\section{Examples}\label{examples}
We know the structure of $\Edg{n}$ for infinitely many $n$. In spite of this, 
actually computing the symmetric configuration $\Edg{n}$ is, in general, not so simple. For relatively small $n$ the computations already become extremely unwieldy.
Take $n=3$. In this case, the preperiod is 48, and the period is 16. $\Edg{3}$ is a symmetric configuration with parameters $16_4$ as defined in Gropp \cite{Gro}. (See Fig. \ref{fig:5.1}, for the Martinetti graph (see Gropp\cite{Gro}), Fig. \ref{fig:5.2}). Its automorphism group has order 2.

\begin{figure}[t]
\begin{center}
\includegraphics[trim=0cm 20cm 12cm 0, width=8cm]{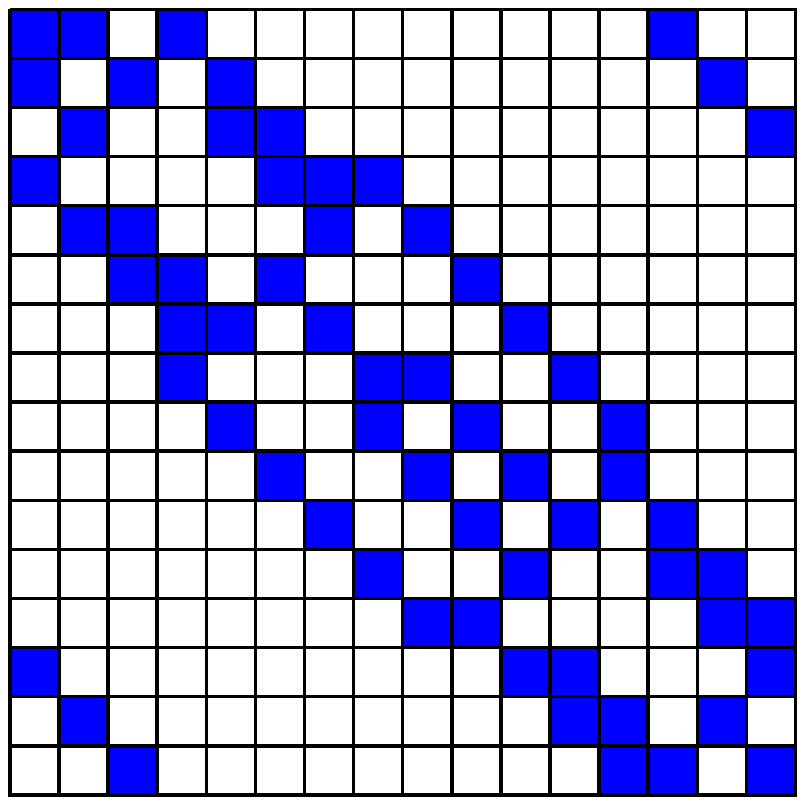}
\end{center}
\caption{Incidence matrix of $\Edg{3}$}\label{fig:5.1}
\end{figure}

\begin{figure}
\includegraphics[trim=4cm 15cm 0 6cm]{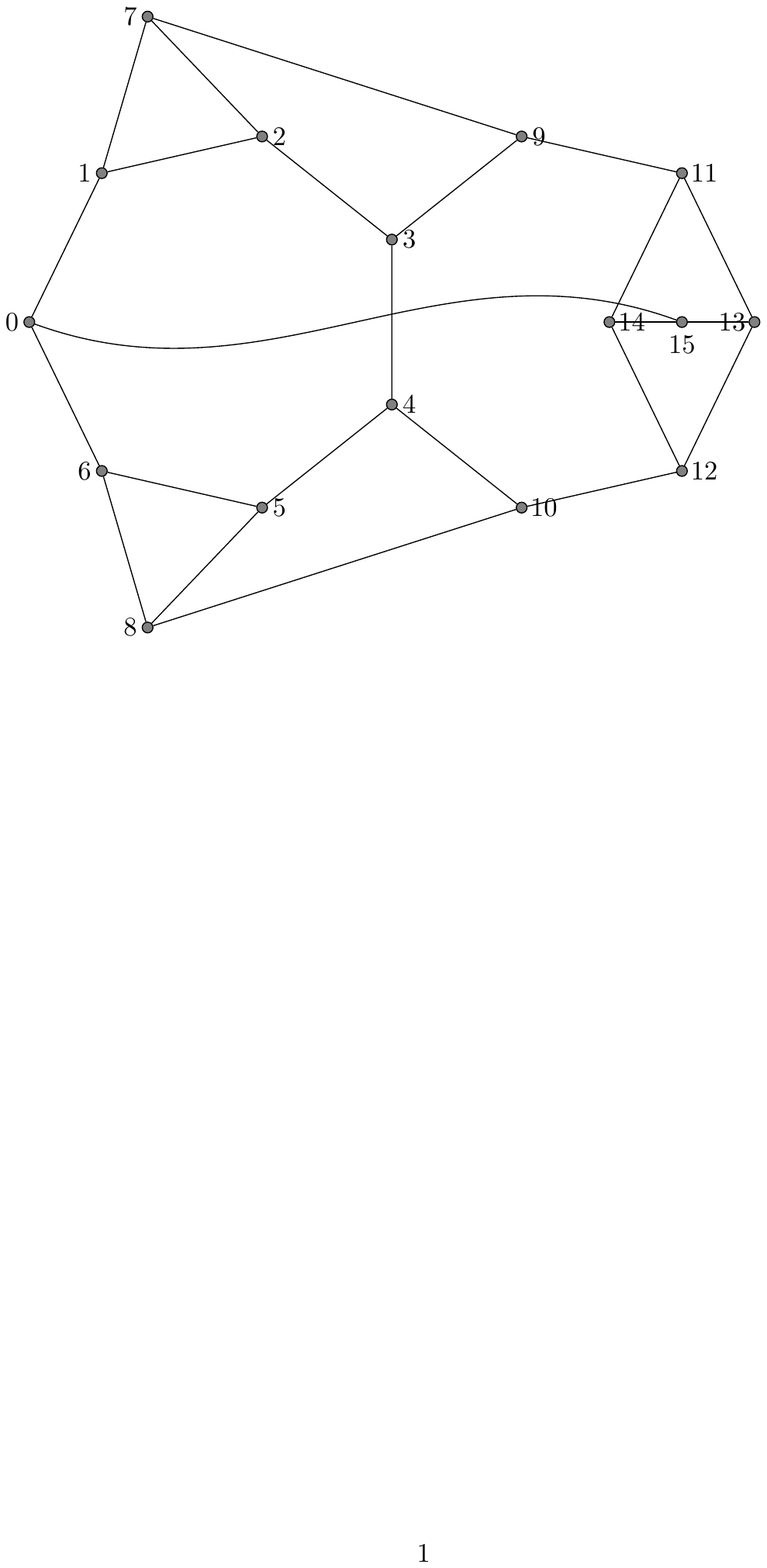}
\caption{Martinetti graph of $\Edg{3}$}\label{fig:5.2}
\end{figure}

For $n=1,~2,~4$, and 16, we find that the preperiod is 0 and the period is $p = n^2+n+1$. For these orders (where the Case 1 in Section 3.4 actually occurs) we can use a more compact construction replacing $\Edg{n}$. We just take $\overline{B}=(\overline{b}_{ij})$ to be the $p\times p$-matrix with coefficients $\overline{b}_{ij} = a_{ij}$ for $1\leq i,j \leq p$. Taking $\overline{B}$ as incidence matrix we construct an incidence geometry $\overline{\mathcal{E}}=\overline{\mathcal{E}}(n)$ as above. Actually, for $n \geq 2$ the incidence structure $\overline{\mathcal{E}}(n)$ in these cases is just a projective plane of order $n$ which turns out to be desarguesian. For $n=1$, we obtain just a triangle, hence a degenerate projective plane. Here
\[
\overline B =\begin{bmatrix}
    1 & 1 & 0 \\
    1 & 0 & 1 \\
    0 & 1 & 1 \\
  \end{bmatrix}.
\]
The original Edgar structure $\Edg{n}$ for $m=2$ and a suitable $v$ in these cases is just the `union', in some sense, of two copies of $\overline{\mathcal{E}}(n)$.

Note hat thereby we have a completely geometric simple construction of, e.g., the Galois field GF(16).

In a forthcoming paper \cite{KHE} we shall prove that, for every Fermat 2-power, that is, a number of the format $n=2^{2^a}$ for $a\geq 0$, $\overline{\mathcal{E}}(n) \cong$ PG(2, $n$).

Apparently the system favors Fermat 2 powers. Until now we could not discover, why this is the case.

As a further example, calculating $n=5$ takes quite a deal of patience. Hans-Joerg Schaeffer calculated that the preperiod is at least 5,652,533. However, $\Edg{5}$ holds a surprise, as we find that the period is just 31, and $\Edg{5}$ is isomorphic to the projective plane of order 5.

\label{sixorten}Now of course it would be very interesting to calculate $\Edg{6}$ and $\Edg{10}$ for example, because it is known that projective planes of these orders do not exist (by Euler \cite{Eul}, Lam \cite{Lam}, and MacWilliams, Sloane and Thompson \cite{Tho}). Unfortunately, we did, until today, not succeed in determining $\Edg{10}$. For $n=6$, we could find an upper bound for the preperiod of 15 trillion lines, while the period is $1,411,455,772,046=1,335,167*9,973*53*2$.

Since we cannot store even a sparse matrix with 15 trillion lines and 7 points in each line, we store only a small amount of lines (about 1 million), which enables us to compute the next line, while forgetting the oldest lines. This allows us to search for cycles up to a length of about one million, which is sufficient up to order 5. For order 6, we use the cycle finding algorithm of Floyd \cite{Flo}. The key idea of this algorithm is to compute for every $k$, the rows in the intervals $[k,k
+n^2+n+1]$ and $[2k,2k+2(n^2+n+1)]$ and check if the rows in the first interval appear in the second interval. 

It would be extremely interesting if $\Edg{10}$ could somehow be calculated.

Many more interesting cases, some of which emerge after a rather short computation, can be found if we extend our investigations to non-symmetric configurations (see \cite{KHE}).\\

\noindent\textbf{Remark.} Every $\{0,1\}$-matrix determines it´s \emph{galf-matrix} $\mathcal{G}(X)$. This matrix can be computed using the program set ProjFinder \cite{P}, and so we obtain an easy way to determine the matrix $A(n)$.\\

\end{document}